\newcommand{\noun}[1]{\textsc{#1}}
\providecommand{\tabularnewline}{\\}
\providecommand{\algorithmname}{Algorithm}
\theoremstyle{plain}
\newtheorem{thm}{\protect\theoremname}
\theoremstyle{plain}
\newtheorem{lem}[thm]{\protect\lemmaname}
\theoremstyle{definition}
\newtheorem{problem}[thm]{\protect\problemname}
\theoremstyle{plain}
\newtheorem{cor}[thm]{\protect\corollaryname}
\theoremstyle{plain}
\newtheorem{prop}[thm]{\protect\propositionname}
\providecommand{\corollaryname}{Corollary}
\providecommand{\lemmaname}{Lemma}
\providecommand{\problemname}{Problem}
\providecommand{\propositionname}{Proposition}
\providecommand{\theoremname}{Theorem}
\begin{document}
\global\long\def\R{\mathbb{R}}%

\global\long\def\C{\mathbb{C}}%

\global\long\def\N{\mathbb{N}}%

\global\long\def\e{{\mathbf{e}}}%

\global\long\def\et#1{{\e(#1)}}%

\global\long\def\ef{{\mathbf{\et{\cdot}}}}%

\global\long\def\x{{\mathbf{x}}}%

\global\long\def\xt#1{{\x(#1)}}%

\global\long\def\xf{{\mathbf{\xt{\cdot}}}}%

\global\long\def\a{{\mathbf{a}}}%

\global\long\def\b{{\mathbf{b}}}%

\global\long\def\d{{\mathbf{d}}}%

\global\long\def\w{{\mathbf{w}}}%

\global\long\def\b{{\mathbf{b}}}%

\global\long\def\u{{\mathbf{u}}}%

\global\long\def\y{{\mathbf{y}}}%

\global\long\def\n{{\mathbf{n}}}%

\global\long\def\k{{\mathbf{k}}}%

\global\long\def\yt#1{{\y(#1)}}%

\global\long\def\yf{{\mathbf{\yt{\cdot}}}}%

\global\long\def\z{{\mathbf{z}}}%

\global\long\def\v{{\mathbf{v}}}%

\global\long\def\h{{\mathbf{h}}}%

\global\long\def\q{{\mathbf{q}}}%

\global\long\def\s{{\mathbf{s}}}%

\global\long\def\p{{\mathbf{p}}}%

\global\long\def\f{{\mathbf{f}}}%

\global\long\def\rb{{\mathbf{r}}}%

\global\long\def\rt#1{{\rb(#1)}}%

\global\long\def\rf{{\mathbf{\rt{\cdot}}}}%

\global\long\def\mat#1{{\ensuremath{\bm{\mathrm{#1}}}}}%

\global\long\def\vec#1{{\ensuremath{\bm{\mathrm{#1}}}}}%

\global\long\def\matN{\ensuremath{{\bm{\mathrm{N}}}}}%

\global\long\def\matX{\ensuremath{{\bm{\mathrm{X}}}}}%

\global\long\def\X{\ensuremath{{\bm{\mathrm{X}}}}}%

\global\long\def\matK{\ensuremath{{\bm{\mathrm{K}}}}}%

\global\long\def\K{\ensuremath{{\bm{\mathrm{K}}}}}%

\global\long\def\matA{\ensuremath{{\bm{\mathrm{A}}}}}%

\global\long\def\A{\ensuremath{{\bm{\mathrm{A}}}}}%

\global\long\def\matB{\ensuremath{{\bm{\mathrm{B}}}}}%

\global\long\def\B{\ensuremath{{\bm{\mathrm{B}}}}}%

\global\long\def\matC{\ensuremath{{\bm{\mathrm{C}}}}}%

\global\long\def\C{\ensuremath{{\bm{\mathrm{C}}}}}%

\global\long\def\matD{\ensuremath{{\bm{\mathrm{D}}}}}%

\global\long\def\D{\ensuremath{{\bm{\mathrm{D}}}}}%

\global\long\def\matE{\ensuremath{{\bm{\mathrm{E}}}}}%

\global\long\def\E{\ensuremath{{\bm{\mathrm{E}}}}}%

\global\long\def\matF{\ensuremath{{\bm{\mathrm{F}}}}}%

\global\long\def\F{\ensuremath{{\bm{\mathrm{F}}}}}%

\global\long\def\matH{\ensuremath{{\bm{\mathrm{H}}}}}%

\global\long\def\H{\ensuremath{{\bm{\mathrm{H}}}}}%

\global\long\def\matP{\ensuremath{{\bm{\mathrm{P}}}}}%

\global\long\def\matG{\ensuremath{{\bm{\mathrm{G}}}}}%

\global\long\def\P{\ensuremath{{\bm{\mathrm{P}}}}}%

\global\long\def\matU{\ensuremath{{\bm{\mathrm{U}}}}}%

\global\long\def\matV{\ensuremath{{\bm{\mathrm{V}}}}}%

\global\long\def\V{\ensuremath{{\bm{\mathrm{V}}}}}%

\global\long\def\matW{\ensuremath{{\bm{\mathrm{W}}}}}%

\global\long\def\matM{\ensuremath{{\bm{\mathrm{M}}}}}%

\global\long\def\M{\ensuremath{{\bm{\mathrm{M}}}}}%

\global\long\def\calA{{\cal A}}%

\global\long\def\calE{{\cal E}}%

\global\long\def\calF{{\cal F}}%

\global\long\def\calK{{\cal K}}%

\global\long\def\calH{{\cal H}}%

\global\long\def\calY{{\cal Y}}%

\global\long\def\calP{{\cal P}}%

\global\long\def\calX{{\cal X}}%

\global\long\def\calS{{\cal S}}%

\global\long\def\calT{{\cal T}}%

\global\long\def\Normal{{\cal \mathcal{N}}}%

\global\long\def\GP{{\cal \mathcal{GP}}}%

\global\long\def\matQ{{\mat Q}}%

\global\long\def\Q{{\mat Q}}%

\global\long\def\matR{\mat R}%

\global\long\def\matS{\mat S}%

\global\long\def\matY{\mat Y}%

\global\long\def\matI{\mat I}%

\global\long\def\I{\mat I}%

\global\long\def\matJ{\mat J}%

\global\long\def\matZ{\mat Z}%

\global\long\def\Z{\mat Z}%

\global\long\def\matW{{\mat W}}%

\global\long\def\W{{\mat W}}%

\global\long\def\matL{\mat L}%

\global\long\def\S#1{{\mathbb{S}_{N}[#1]}}%

\global\long\def\IS#1{{\mathbb{S}_{N}^{-1!}[#1]}}%

\global\long\def\PN{\mathbb{P}_{N}}%

\global\long\def\TNormS#1{\|#1\|_{2}^{2}}%

\global\long\def\ITNormS#1{\|#1\|_{2}^{-2}}%

\global\long\def\ONorm#1{\|#1\Vert_{1}}%

\global\long\def\TNorm#1{\|#1\|_{2}}%

\global\long\def\InfNorm#1{\|#1\|_{\infty}}%

\global\long\def\FNorm#1{\|#1\|_{F}}%

\global\long\def\FNormS#1{\|#1\|_{F}^{2}}%

\global\long\def\UNorm#1{\|#1\|_{\matU}}%

\global\long\def\UNormS#1{\|#1\|_{\matU}^{2}}%

\global\long\def\UINormS#1{\|#1\|_{\matU^{-1}}^{2}}%

\global\long\def\ANorm#1{\|#1\|_{\matA}}%

\global\long\def\BNorm#1{\|#1\|_{\mat B}}%

\global\long\def\ANormS#1{\|#1\|_{\matA}^{2}}%

\global\long\def\AINormS#1{\|#1\|_{\matA^{-1}}^{2}}%

\global\long\def\WNorm#1{\|#1\|_{\matW}}%

\global\long\def\T{\textsc{T}}%

\global\long\def\conj{\textsc{*}}%

\global\long\def\pinv{\textsc{+}}%

\global\long\def\Prob{\operatorname{Pr}}%

\global\long\def\Expect{\operatorname{\mathbb{E}}}%

\global\long\def\ExpectC#1#2{{\mathbb{E}}_{#1}\left[#2\right]}%

\global\long\def\VarC#1#2{{\mathbb{\mathrm{Var}}}_{#1}\left[#2\right]}%

\global\long\def\dotprod#1#2#3{(#1,#2)_{#3}}%

\global\long\def\dotprodN#1#2{(#1,#2)_{{\cal N}}}%

\global\long\def\dotprodH#1#2{\langle#1,#2\rangle_{{\cal {\cal H}}}}%

\global\long\def\dotprodsqr#1#2#3{(#1,#2)_{#3}^{2}}%

\global\long\def\Trace#1{{\bf Tr}\left(#1\right)}%

\global\long\def\Vec#1{{\bf Vec}\left(#1\right)}%

\global\long\def\nnz#1{{\bf nnz}\left(#1\right)}%

\global\long\def\MSE#1{{\bf MSE}\left(#1\right)}%

\global\long\def\WMSE#1{{\bf WMSE}\left(#1\right)}%

\global\long\def\EWMSE#1{{\bf EWMSE}\left(#1\right)}%

\global\long\def\nicehalf{\nicefrac{1}{2}}%

\global\long\def\nicebetahalf{\nicefrac{\beta}{2}}%

\global\long\def\argmin{\operatornamewithlimits{argmin}}%

\global\long\def\argmax{\operatornamewithlimits{argmax}}%

\global\long\def\norm#1{\Vert#1\Vert}%

\global\long\def\sign{\operatorname{sign}}%

\global\long\def\proj{\operatorname{proj}}%

\global\long\def\diag{\operatorname{diag}}%

\global\long\def\VOPT{\operatorname\{VOPT\}}%

\global\long\def\dist{\operatorname{dist}}%

\global\long\def\diag{\operatorname{diag}}%

\global\long\def\supp{\operatorname{supp}}%

\global\long\def\sp{\operatorname{span}}%

\global\long\def\rank{\operatorname{rank}}%

\global\long\def\abs{\operatorname{abs}}%

\global\long\def\onehot{\operatorname{onehot}}%

\global\long\def\softmax{\operatorname{softmax}}%

\newcommand*\diff{\mathop{}\!\mathrm{d}} 

\global\long\def\dd{\diff}%

\global\long\def\whatlambda{\w_{\lambda}}%

\global\long\def\Plambda{\mat P_{\lambda}}%

\global\long\def\Pperplambda{\left(\mat I-\Plambda\right)}%

\global\long\def\Mlambda{\matM_{\lambda}}%

\global\long\def\Mlambdafull{\matM+\lambda\matI}%

\global\long\def\Mlambdafullinv{\left(\matM+\lambda\matI\right)^{-1}}%

\global\long\def\Mdaggerlambda{{\mathbf{\mat M_{\lambda}^{+}}}}%

\global\long\def\Xdaggerlambda{{\mathbf{\mat X_{\lambda}^{+}}}}%

\global\long\def\XT{{\mathbf{X}^{\T}}}%

\global\long\def\XXT{{\matX\mat X^{\T}}}%

\global\long\def\XTX{{\matX^{\T}\mat X}}%

\global\long\def\VT{{\mathbf{V}^{\T}}}%

\global\long\def\VVT{{\matV\mat V^{\T}}}%

\global\long\def\VTV{{\matV^{\T}\mat V}}%

\global\long\def\varphibar#1#2{{\bar{\varphi}_{#1,#2}}}%

\global\long\def\varphilambda{{\varphi_{\lambda}}}%

\global\long\def\RE#1{{\bf Re}\left(#1\right)}%

\title{Low-Rank Updates of Matrix Square Roots}
\author{Shany Shmueli\\
 Tel Aviv University \\
 shanyshmueli@mail.tau.ac.il\\
\and Petros Drineas\\
Purdue University\\
pdrineas@purdue.edu\\
\and Haim Avron \\
 Tel Aviv University \\
haimav@tauex.tau.ac.il}
\maketitle
\begin{abstract}
Models in which the covariance matrix has the structure of a sparse
matrix plus a low rank perturbation are ubiquitous in data science
applications. It is often desirable for algorithms to take advantage
of such structures, avoiding costly matrix computations that often
require cubic time and quadratic storage. This is often accomplished
by performing operations that maintain such structures, e.g. matrix
inversion via the Sherman-Morrison-Woodbury formula. In this paper
we consider the matrix square root and inverse square root operations.
Given a low rank perturbation to a matrix, we argue that a low-rank
approximate correction to the (inverse) square root exists. We do
so by establishing a geometric decay bound on the true correction's
eigenvalues. We then proceed to frame the correction as the solution
of an algebraic Riccati equation, and discuss how a low-rank solution
to that equation can be computed. We analyze the approximation error
incurred when approximately solving the algebraic Riccati equation,
providing spectral and Frobenius norm forward and backward error bounds.
Finally, we describe several applications of our algorithms, and demonstrate
their utility in numerical experiments.
\end{abstract}

\section{Introduction}

In applications, and in particular data science applications, one
often encounters matrices that are low-rank perturbations of another
(perhaps simpler) matrix. For example, models in which the covariance
matrix has the structure of a sparse matrix plus a low rank perturbation
are common. In another example, it is common for algorithms to maintain
a matrix that is iteratively updated by low-rank perturbations.

It is often desirable for algorithms to take advantage of such structures,
avoiding costly matrix computations that often require cubic time
and quadratic storage. An indispensable tool for utilizing low-rank
perturbations is the famous Sherman-Morrison-Woodbury formula, which
shows that the inverse of a low-rank perturbation can be obtained
using a low-rank correction of the inverse. While the usefulness of
the Sherman-Morrison-Woodbury formula cannot be understated, other
matrix functions also frequently appear in applications. One naturally
asks the following questions. Given a matrix function $f$, when does
a low rank perturbation of a matrix $\matA$ correspond to a (approximately)
low-rank correction of $f(\matA)$? Can we find a high quality approximate
correction efficiently, i.e. without computing the exact correction
and truncating it using a SVD?

In this paper, we answer these questions affirmatively for two important
and closely related functions: the matrix square root and inverse
square root. The square root operation on matrices is a fundamental
operation that frequently appears in mathematical analysis. Moreover,
matrix square roots and their inverse arise frequently in data science
applications, e.g. when sampling from a high dimensional multivariate
Gaussian distribution, or when whitening data. Computing the square
root (or inverse square root) of low-rank perturbations of simple
matrices (e.g. diagonal matrices) appear in quite a few data science
applications, e.g. in the aforementioned applications when the covariance
matrix follows a spiked population model. For more discussion on applications
of low-rank perturbations of matrix square root, see Section~\ref{sec:applications}.

In particular, we show that given a low rank perturbation $\matD$
to a matrix $\matA$, we can approximate $(\matA+\matD)^{\nicehalf}$
well by a low rank correction to $\matA^{\nicehalf}$. We do so by
proving a geometric decay bound on the eigenvalues of $(\matA+\matD)^{\nicehalf}-\matA^{\nicehalf}$.
We also provide a similar bound for the inverse square root. We then
proceed to show that the exact update is a solution of an algebraic
Riccati equation, and discuss how a low-rank approximate solution
to that equation can be computed. This allows us to propose concrete
algorithms for updating and downdating the matrix square root and
matrix inverse square root. Finally, we report experiments that corroborate
our theoretical results.

\subsection{Related Work}

Most previous work on the matrix square root focused on computing
$\matA^{\nicehalf}\x$ and $\matA^{-\nicehalf}\x$ for a given vector
$\x$ using a Krylov method, possibly with preconditioning~\citep{Higham08,AEP13,ChowSaad14,FGS14}.
The motivation for most of the aforementioned works is sampling from
a multivariate Gaussian distributions. Worth mentioning is recent
work by \citet{PleissEtAl20} which combines a Krylov subspace method
with a rational approximation, and also allows preconditioning.

The problem of updating a function of a matrix after a low rank perturbation,
i.e. computing $f(\matA+\matD)$ given $f(\matA)$ where $\matD$
is low-rank, has been recently receiving attention. The Sherman-Morrison-Woodbury
formula is a well known formula for updating the matrix inverse, i.e.
$f(x)=x^{-1}$. \citet{BV00} showed that that when $f$ is a rational
function of degree $q$, a rank one perturbation of $\matA$ corresponds
to a rank $q$ perturbation of $f(\matA)$. This paper also provides
an explicit formula for the low rank perturbation. \citet{Higham08}
showed that a rank $k$ perturbation of $\matA=\alpha\matI$ corresponds
to a rank $k$ perturbation of $f(\matA)$ for \emph{any }$f$ (see
Theorem 1.35 therein). As for inexact corrections, \citet{BKS18}
proposed a Krylov method for computing a low-rank correction of $f(\matA)$
that approximates $f(\matA+\matD)$ well for any analytic $f$ (however,
approximation quality depends on properties of the function itself,
e.g. how well it is approximated by a polynomial). In followup work,
they proposed a rational Krylov method \citep{BCKS20}, citing the
matrix square root as an example of a case in which their original
method might have slow convergence.

The work most similar to ours is \citep{FHL22}. In that paper, the
authors consider the problem of computing the square root of a matrix
of the form $\alpha\matI+\matU\matV^{\T}$, as a correction of $\sqrt{\alpha}\matI$.
Due to \citep[Theorem 1.35]{Higham08}, the rank of the correction
is the same as the rank of $\matU\matV^{\T}$. However, the formula
in \citep[Theorem 1.35]{Higham08} requires $\matV^{\T}\matU$ to
be non-singular, which is not required for the square root to be defined.
The authors circumvent this issue by suggesting another formula for
the square root, or by using a Newton iteration. In a way, the algorithm
in \citep{FHL22} is more general than our method since it allows
non-symmetric updates. However, in another way it is less general:
the matrix to be perturbed must be a scaled identity matrix. Moreover,
we also suggest a method for updating the inverse square root.

\section{Preliminaries}

\subsection{Notation and Basic Definitions}

We denote scalars using Greek letters or using $x,y,\dots$. Vectors
are denoted by $\x,\y,\dots$ and matrices by $\matA,\mat B,\dots$.
The $n\times n$ identity matrix is denoted $\matI_{n}$. We use the
convention that vectors are column-vectors.

Given a symmetric positive-semidefinite matrix $\matA\in\R^{n\times n}$,
another matrix $\matB\in\R^{n\times n}$ is a \emph{square root} of
$\matA$ if $\matB^{2}=\matA$. There is a unique square root of $\matA$
that is also positive semi-definite, which is called the \emph{principal
square root }and we denote it by $\matA^{\nicehalf}$.

Given two matrices $\matF$ and $\matG$, the $(\matF,\matG)$-displacement
rank of $\matA$ is defined as the rank of $\matF\matA-\matA\mat G$.
Displacement structures and displacement rank are closely connected
to the Sylvester equation.

\subsection{Low-Rank Algebraic Riccati Equation}

Consider the following equation in $\matX\in\R^{n\times n}$,
\begin{equation}
\matE\matX+\matX\matE+\alpha\matX^{2}=\matG^{\T}\matG\label{eq:algebraic-ricatti}
\end{equation}
where $\matE\in\R^{n\times n}$ is a symmetric full-rank matrix, $\matG\in\R^{k\times n}$
where $k\ll n$, and $\alpha=\pm1$. Our algorithms are based on approximately
solving Eq.~(\ref{eq:algebraic-ricatti}) with a positive semidefinite
low-rank $\matX$.

If $\alpha=+1$, Eq.~(\ref{eq:algebraic-ricatti}) is an instance
of the \emph{algebraic Riccati equation}. There is a rich literature
on algorithms for finding low rank solutions for the algebraic Riccati
equation. We note the survey due to \citet{BS13}, and the book by
\citet{BIM11}. In our experiment, we use a recently  proposed meta-scheme
for approximately solving a slightly more general version of Eq.~(\ref{eq:algebraic-ricatti})
based on Riemannian optimization \citep{BV14}. When applied to Eq.~(\ref{eq:algebraic-ricatti})
their scheme assumes the ability to take products of $\matE$ by a
vector, and to solve linear equations where the matrix is equal to
$\matE^{2}+\textrm{low-rank}$, which is easily achievable via the
Sherman-Morrison-Woodbury formula if we have access to an oracle that
multiplies $\matE^{-1}$ by a vector. Under the assumption that each
rank update in \citep{BV14} requires $O(1)$ trust-region iterations,
and that the target maximum rank of $\matX$ is $r$, the overall
cost of the scheme in \citep{BV14} is $O((T_{\matE}+T_{\matE^{-1}})r^{2}+nr^{4})$
where $T_{\matE}$ and $T_{\matE^{-1}}$ is the cost of multiplying
$\matE$ and $\matE^{-1}$ by a vector (respectively). In most of
our applications $\matE$ is diagonal, so the cost reduces to $O(nr^{4})$.

The method described in \citep{BV14} handles only the case of $\alpha=+1$.
However, it can be generalized to the case of $\alpha=-1$. We give
details in Appendix~\ref{app:negative-ricatti}.

In our algorithms, we denote the process of solving Eq.~(\ref{eq:algebraic-ricatti})
via the notation
\[
\matU\gets\textrm{RiccatiLRSolver}(\matE,\matG,\alpha,r)
\]
where $r$ is the target maximum rank, and $\matU\in\R^{n\times r}$
is a symmetric factor of the solution $\matX$ (i.e., $\matX=\matU\matU^{\T}$).
In the complexity analyses we assume that this process takes $O((T_{\matE}+T_{\matE^{-1}})r^{2}+nr^{4})$,
as justified by the discussion above. However, we stress that our
algorithm can use any algorithm for finding low-rank approximate solution
to the algebraic Ricatti equation. We remark that our algorithm actually
usse only the case $\alpha=+1$, but for some discussions it is useful
to consider also the ability to solve for $\alpha=-1$. 

\subsection{Problem Statement}

Let $\matA\in\R^{n\times n}$ be a a symmetric positive semidefinite
matrix. Suppose we are given $\matA^{\nicehalf}$, perhaps implicitly
(i.e., as a function that maps a vector $\x$ to $\matA^{\nicehalf}\x$).
Given a perturbation $\matD\in\R^{n\times n}$ of rank $k\ll n$,
our goal is to approximate $(\matA+\matD)^{\nicehalf}$ using a low-rank
correction of $\matA^{\nicehalf}$. That is, to find a $\tilde{\Delta}$
of rank $r$ such that $(\matA+\matD)^{\nicehalf}\approx\matA^{\nicehalf}+\tilde{\Delta}$.
The rank $r$ of $\tilde{\Delta}$ should be treated as a parameter,
and should optimally be $O(k)$. We show in Section~\ref{sec:decay}
that we can expect to find a good approximation with $r\ll n$.

We make two additional assumptions on $\matD$. First, we assume that
it is either positive semidefinite or negative semidefinite. Indefinite
perturbations can be handled by splitting the update into two semidefinite
perturbations and applying our algorithms sequentially. Secondly,
we assume that $\matD$ is given in a symmetric factorized form. We
can combine the last two assumptions in a single assumption by assuming
we are given a $\matZ\in\R^{n\times k}$ such that $\matD=\alpha\matZ\matZ^{\T}$
where $\alpha=\pm1$. We refer to the case of $\alpha=+1$ as \emph{updating
the square root}, and $\alpha=-1$ as \emph{downdating the square
root.}

In the case of updating the square root, we are guaranteed that $\matA+\matZ\matZ^{\T}$
is positive definite for any $\matZ$, but this does not necessarily
holds for downdating. Thus, for downdates we further assume that $\matA-\matZ\matZ^{\T}$
is positive definite. The following Lemma gives an easy way to test
this condition in cases we also have access to the inverse of $\matA^{\nicehalf}$
or $\matA$.
\begin{lem}
\label{lem:downdate-psd}Suppose that $\matA\in\R^{n\times n}$ is
symmetric positive definite, and $\matZ\in\R^{n\times k}$ for $k\leq n$.
Then $\matA-\matZ\matZ^{\T}$ is positive semidefinite if and only
if $\matI_{k}-\matZ^{\T}\matA^{-1}\matZ$ is positive semi definite.
\end{lem}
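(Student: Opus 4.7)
The plan is to reduce both conditions to the same statement about the singular values of the matrix $\matB := \matA^{-\nicehalf}\matZ$. Since $\matA$ is symmetric positive definite, $\matA^{\nicehalf}$ is invertible, so conjugation by $\matA^{-\nicehalf}$ preserves positive semidefiniteness. This means
\[
\matA - \matZ\matZ^{\T} \succeq 0 \iff \matA^{-\nicehalf}(\matA - \matZ\matZ^{\T})\matA^{-\nicehalf} \succeq 0 \iff \matI_n - \matB\matB^{\T} \succeq 0.
\]
Similarly, $\matI_k - \matZ^{\T}\matA^{-1}\matZ = \matI_k - \matB^{\T}\matB$, so the second condition in the lemma is $\matI_k - \matB^{\T}\matB \succeq 0$.

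The remaining step is to observe that $\matI_n - \matB\matB^{\T} \succeq 0$ if and only if $\matI_k - \matB^{\T}\matB \succeq 0$. This follows from the standard fact that $\matB\matB^{\T}$ and $\matB^{\T}\matB$ share the same nonzero eigenvalues (e.g., via the SVD of $\matB$); hence the largest eigenvalue of one is at most $1$ exactly when the largest eigenvalue of the other is, which is precisely the two positive semidefiniteness conditions.

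There is no real obstacle: the only subtlety is that the statement only makes sense because $\matA$ is invertible (so $\matA^{-\nicehalf}$ and $\matA^{-1}$ are well-defined), which is already in the hypothesis. An alternative, essentially equivalent route is to invoke the Schur complement lemma on the $2\times 2$ block matrix with blocks $\matA$, $\matZ$, $\matZ^{\T}$, $\matI_k$, whose two Schur complements are exactly $\matA - \matZ\matZ^{\T}$ and $\matI_k - \matZ^{\T}\matA^{-1}\matZ$; both being equivalent to the positive semidefiniteness of the full block matrix yields the claim.
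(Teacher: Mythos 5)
Your proof is correct and follows essentially the same route as the paper's: conjugate by $\matA^{-\nicehalf}$ to reduce the first condition to $\matI_n - \matB\matB^{\T}\succeq 0$ with $\matB=\matA^{-\nicehalf}\matZ$, then use the fact that $\matB\matB^{\T}$ and $\matB^{\T}\matB$ share nonzero eigenvalues. Your phrasing via the largest eigenvalue is in fact slightly cleaner than the paper's (which loosely says the nonzero eigenvalues of $\matA^{-\nicehalf}\matZ\matZ^{\T}\matA^{-\nicehalf}$ "equal the eigenvalues of $\matZ^{\T}\matA^{-1}\matZ$"), and the Schur-complement alternative you mention is a valid, equally standard variant.
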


\begin{proof}
$\matA-\matZ\matZ^{\T}\succeq0$ if and only if $\matA\succeq\matZ\matZ^{\T}$.
Multiplying by $\matA^{-\nicehalf}$ on both sides, we see that this
holds if and only if $\matA^{-\nicehalf}\matZ\matZ^{\T}\matA^{-\nicehalf}\preceq\matI_{n}$.
The last inequality holds if and only if all the eigenvalues of $\matA^{-\nicehalf}\matZ\matZ^{\T}\matA^{-\nicehalf}$
are smaller or equal to $1$. However, the non-zero eigenvalues of
that matrix are equal to the eigenvalues of $\matZ^{\T}\matA^{-1}\matZ$,
so $\matA-\matZ\matZ^{\T}\succeq0$ if and only if $\matZ^{\T}\matA^{-1}\matZ\preceq\matI_{k}$,
which is equivalent to $\matI_{k}-\matZ^{\T}\matA^{-1}\matZ$ being
positive definite.
\end{proof}
The correction $\tilde{\Delta}$ should be returned in factorized
form. For reasons that will become apparent in our algorithm, the
correction will be positive semidefinite when $\matD$ is positive
semidefinite, and negative semidefinite when $\matD$ is negative
semidefinite. Thus, we require our algorithm to return a $\matU\in\R^{n\times r}$
such that $(\matA+\alpha\matD)^{\nicehalf}\approx\matA^{\nicehalf}+\alpha\matU\matU^{\T}$.

The discussion so far was for updating or downdating the square root.
We also aim at correcting the inverse of the square root, i.e. $\matA^{-\nicehalf}$.
Again, we will require $\matD$ to be either positive semidefinite
or negative semidefinite, and $\tilde{\Delta}$ to be factorized as
well and definite. However, for updating the inverse of the square
root, the definiteness of $\tilde{\Delta}$ will be opposite to the
one of $\matD$.

We can capture the distinction between updating the square root and
the inverse square root with an additional parameter $\beta=\pm1$,
where we wish to update $\matA^{\nicefrac{\beta}{2}}$. Putting it
all together, we arrive at the following problem:
\begin{problem}
\label{prob:main}Given implicit access to $\matA^{\nicehalf}$ and/or
$\matA^{-\nicehalf}$, $\matZ\in\R^{n\times k}$, $\alpha=\pm1$,
$\beta=\pm1$ and target rank $r$, return a $\matU\in\R^{n\times r}$
such that 
\[
(\matA+\alpha\matZ\matZ^{\T})^{\nicefrac{\beta}{2}}\approx\matA^{\beta/2}+\alpha\beta\matU\matU^{\T}.
\]
\end{problem}

\section{\label{sec:decay}Decay Bounds for Square Roots Corrections}

Given a matrix $\matA$ and a low-rank perturbation $\matD$, our
goal is to find a low-rank correction $\tilde{\Delta}$ to $\matA^{\beta/2}$.
However, one can ask whether such a correction even exists? Let $\Delta$
denote the exact correction, i.e. 
\[
\Delta\coloneqq\left(\matA+\matD\right)^{\nicefrac{\beta}{2}}-\matA^{\nicefrac{\beta}{2}}.
\]
In this section we show that the eigenvalues of $\Delta$ exhibit
a geometric decay. Thus, by applying eigenvalue thresholding to $\Delta$
we can obtain a low-rank approximate correction $\tilde{\Delta}$
(however, our algorithm uses a different method for finding $\tilde{\Delta}$).

\subsection{Decay Bound for Square Root Corrections}

We first consider the case that $\beta=1$, so we are perturbing the
square root.

Let us denote $\matB\coloneqq\matA+\matD$. The following is a known
identity: 
\[
\matA^{\nicehalf}\Delta+\Delta\matB^{\nicehalf}=\matB-\matA.
\]
Indeed, since $\matB=(\matB^{\nicehalf})^{2}=(\matA^{\nicehalf}+\Delta)^{2}$
we have 
\begin{eqnarray*}
\matB-\matA & = & (\matA^{\nicehalf}+\Delta)^{2}-\matA\\
 & = & \matA+\matA^{\nicehalf}\Delta+\Delta(\matA^{\nicehalf}+\Delta)-\matA\\
 & = & \matA^{\nicehalf}\Delta+\Delta(\matA^{\nicehalf}+\Delta)\\
 & = & \matA^{\nicehalf}\Delta+\Delta\matB^{\nicehalf}
\end{eqnarray*}

In our case, that is when $\matB=\matA+\matD$, we see that $\Delta$
upholds the following Sylvester equation:
\begin{equation}
\matA^{\nicehalf}\Delta+\Delta\matB^{\nicehalf}=\matD\label{eq:sylvester_sqrt}
\end{equation}
Since the rank of $\matD$ is $k$, we see that $\Delta$ has a $(\matA^{\nicehalf},-\matB^{\nicehalf})$-displacement
rank of $k$. \citet{BT17} recently developed singular value decay
bounds for matrices with displacement structure. Using their results,
we can prove the following bound.
\begin{thm}
\label{prop:decay}Suppose that both $\matA$ and $\matB=\matA+\matD$
are symmetric positive definite matrices, and that $\matD$ is of
rank $k$. Let $\Delta=\matB^{\nicehalf}-\matA^{\nicehalf}$. Then
for $j\geq1$, the singular values of $\Delta$ satisfy the following
bound
\[
\sigma_{j+kl}(\Delta)\leq4\left[\exp\left(\frac{\pi^{2}}{2\log(4\hat{\kappa})}\right)\right]^{-2l}\sigma_{j}(\Delta)
\]
where
\[
\hat{\kappa}=\frac{2(\sqrt{\TNorm{\matA}+\TNorm{\matD}}+\sqrt{\lambda_{\min}(\matA)}/2)}{\sqrt{\lambda_{\min}(\matA)}}
\]
\end{thm}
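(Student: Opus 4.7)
My plan is to read the derived identity
\[
\matA^{\nicehalf}\Delta - \Delta(-\matB^{\nicehalf}) = \matD
\]
as a displacement equation, observe that $\Delta$ therefore has $(\matA^{\nicehalf}, -\matB^{\nicehalf})$-displacement rank at most $k$, and then invoke the singular value decay theorem of Beckermann and Townsend \citep{BT17}. Both $\matA^{\nicehalf}$ and $-\matB^{\nicehalf}$ are real symmetric, and their spectra lie in the disjoint intervals $E \subseteq [\sqrt{\lambda_{\min}(\matA)}, \sqrt{\TNorm\matA}] \subset (0, \infty)$ and $F \subseteq [-\sqrt{\TNorm\matB}, -\sqrt{\lambda_{\min}(\matB)}] \subset (-\infty, 0)$, so the hypotheses of the BT17 decay bound (normal coefficients with disjoint spectra) are met.

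Applying that theorem immediately gives $\sigma_{j+kl}(\Delta) \leq Z_l(E, F)\, \sigma_j(\Delta)$, where $Z_l(E, F)$ denotes the $l$-th Zolotarev number for the pair $(E, F)$. What remains is to bound $Z_l(E, F)$ in closed form and massage it into the $\hat\kappa$-dependent expression in the theorem statement. I would use monotonicity of $Z_l$ under set enlargement to replace $(E, F)$ by a common symmetric enclosure $([m, M], [-M, -m])$, and then invoke the classical Zolotarev estimate for symmetric disjoint intervals,
\[
Z_l([m,M], [-M,-m]) \leq 4\left[\exp\!\left(\frac{\pi^2}{2\log(4M/m)}\right)\right]^{-2l},
\]
which already matches the structural form of the target bound.

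The main obstacle is the choice of $(M, m)$. Taking $M = \sqrt{\TNorm\matA + \TNorm\matD}$ comfortably upper-bounds both $\sqrt{\TNorm\matA}$ and $\sqrt{\TNorm\matB}$ via $\TNorm\matB \leq \TNorm\matA + \TNorm\matD$. The delicate part is $m$: it must lower-bound \emph{both} $\sqrt{\lambda_{\min}(\matA)}$ and $\sqrt{\lambda_{\min}(\matB)}$, and because $\matD$ is not assumed semidefinite, $\lambda_{\min}(\matB)$ can be substantially smaller than $\lambda_{\min}(\matA)$. The natural choice $m = \sqrt{\lambda_{\min}(\matA)}/2$ produces exactly $M/m = \hat\kappa - 1$, so $4M/m \leq 4\hat\kappa$ and the bound rearranges into the claimed form; however, genuinely justifying the enclosure $F \subset [-M, -m]$ in the worst case will require either an additional perturbation argument bounding $\lambda_{\min}(\matB)$ from below, a common shift of the two spectra before symmetrization, or direct use of a refined non-symmetric Zolotarev estimate from \citep{BT17}. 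Executing this enclosure step cleanly is what I expect to be the principal technical work; the rest of the proof is a direct reading of the Sylvester identity through the BT17 machinery.
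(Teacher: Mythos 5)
Your overall route is the paper's route: read Eq.~(\ref{eq:sylvester_sqrt}) as a displacement equation of rank $k$, invoke the singular value decay bound of \citet{BT17} with normal coefficients, and close with the classical Zolotarev estimate for a symmetric pair of intervals $([a,b],[-b,-a])$. You have also correctly diagnosed the one genuine obstacle --- that $\lambda_{\min}(\matB)$ admits no useful lower bound in terms of $\lambda_{\min}(\matA)$ when $\matD$ is indefinite, so the naive enclosure $F\subseteq[-M,-m]$ with $m=\sqrt{\lambda_{\min}(\matA)}/2$ can fail. But you stop there, leaving the ``principal technical work'' as a menu of three possible remedies. That step is not optional bookkeeping; without it the proof does not go through, so as written this is a gap.

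The resolution is the second item on your menu, the common shift, and it is worth seeing why it works so cleanly. With $\mu=\sqrt{\lambda_{\min}(\matA)}$, rewrite the Sylvester identity as
\[
\bigl(\matA^{\nicehalf}-(\mu/2)\matI_{n}\bigr)\Delta+\Delta\bigl(\matB^{\nicehalf}+(\mu/2)\matI_{n}\bigr)=\matD,
\]
which is the same equation since the two $(\mu/2)\Delta$ terms cancel. The point of the shift is that the lower bound on the $F$ side no longer needs $\lambda_{\min}(\matB)$ at all: positive definiteness of $\matB$ alone gives $\matB^{\nicehalf}+(\mu/2)\matI_{n}\succ(\mu/2)\matI_{n}$, so the spectrum of $-(\matB^{\nicehalf}+(\mu/2)\matI_{n})$ lies in $(-\infty,-\mu/2]$, while the spectrum of $\matA^{\nicehalf}-(\mu/2)\matI_{n}$ lies in $[\mu/2,\infty)$. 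Taking $a=\mu/2$ and $b=\sqrt{\TNorm{\matA}+\TNorm{\matD}}+\mu/2$ (the upper bound on the $F$ side using $\TNorm{\matB}\leq\TNorm{\matA}+\TNorm{\matD}$, as you already have) gives $E=[a,b]$, $F=[-b,-a]$ with $b/a=\hat{\kappa}$ exactly --- note this is $\hat{\kappa}$, not your $\hat{\kappa}-1$, because the shift also raises the upper endpoint by $\mu/2$. Your alternative remedies are weaker: a perturbation bound on $\lambda_{\min}(\matB)$ from below simply does not exist under the stated hypotheses (only $\matB\succ0$ is assumed), and reaching for a non-symmetric Zolotarev estimate is unnecessary once the shift symmetrizes the enclosure for free.
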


\begin{proof}
Let $\mu=\sqrt{\lambda_{\min}(\matA)}$ and $\delta=\sqrt{\TNorm{\matA}+\TNorm{\matD}}$.
The matrix $\Delta$ upholds the following Sylvester equation 
\[
(\matA^{\nicehalf}-(\mu/2)\matI_{n})\Delta+\Delta(\matB^{\nicehalf}+(\mu/2)\matI_{n})=\matD
\]
Thus, the results in~\citep{BT17} show that we can bound 
\begin{equation}
\sigma_{j+kl}(\Delta)\leq Z_{l}(E,F)\sigma_{j}(\Delta)\label{eq:bt-1}
\end{equation}
where $E$ is any set that contains the spectrum of $\matA^{\nicehalf}-(\mu/2)\matI_{n}$,
$F$ is any set that contains the spectrum of $-(\matB^{\nicehalf}+(\mu/2)\matI_{n})$,
and $Z_{l}(E,F)$ is the Zolotarev number.

Let $a=\mu/2$. Since $\mu$ is the minimal eigenvalue of $\matA^{\nicehalf}$,
all the eigenvalues of $\matA^{\nicehalf}-(\mu/2)\matI_{n}$ are bigger
than $a$ or equal to it. Since $\matB$ is by assumption positive
definite, all the eigenvalues of $-(\matB^{\nicehalf}+(\mu/2)\matI_{n})$
are smaller than $-a$ or equal to it. Let $b=\delta+\mu/2$. Obviously,
all the eigenvalues of $\matA^{\nicehalf}-(\mu/2)\matI_{n}$ are smaller
than $b$. Furthermore, since $\TNorm{\matB}\leq\TNorm{\matA}+\TNorm{\matD}$,
all the eigenvalues of $-(\matB^{\nicehalf}+(\mu/2)\matI_{n})$ are
bigger than or equal to $-b$. Thus, we can take $E=[a,b]$ and $F=[-b,-a]$.
In~\citep{BT17} it is also shown that 
\[
Z_{l}([a,b],[-b,-a])\leq4\left[\exp\left(\frac{\pi^{2}}{2\log(4b/a)}\right)\right]^{-2l}
\]
plugging that into Eq.~(\ref{eq:bt-1}) gives the desired bound.
\end{proof}
\begin{figure}
\begin{centering}
\begin{tabular}{cc}
\includegraphics[width=0.4\columnwidth]{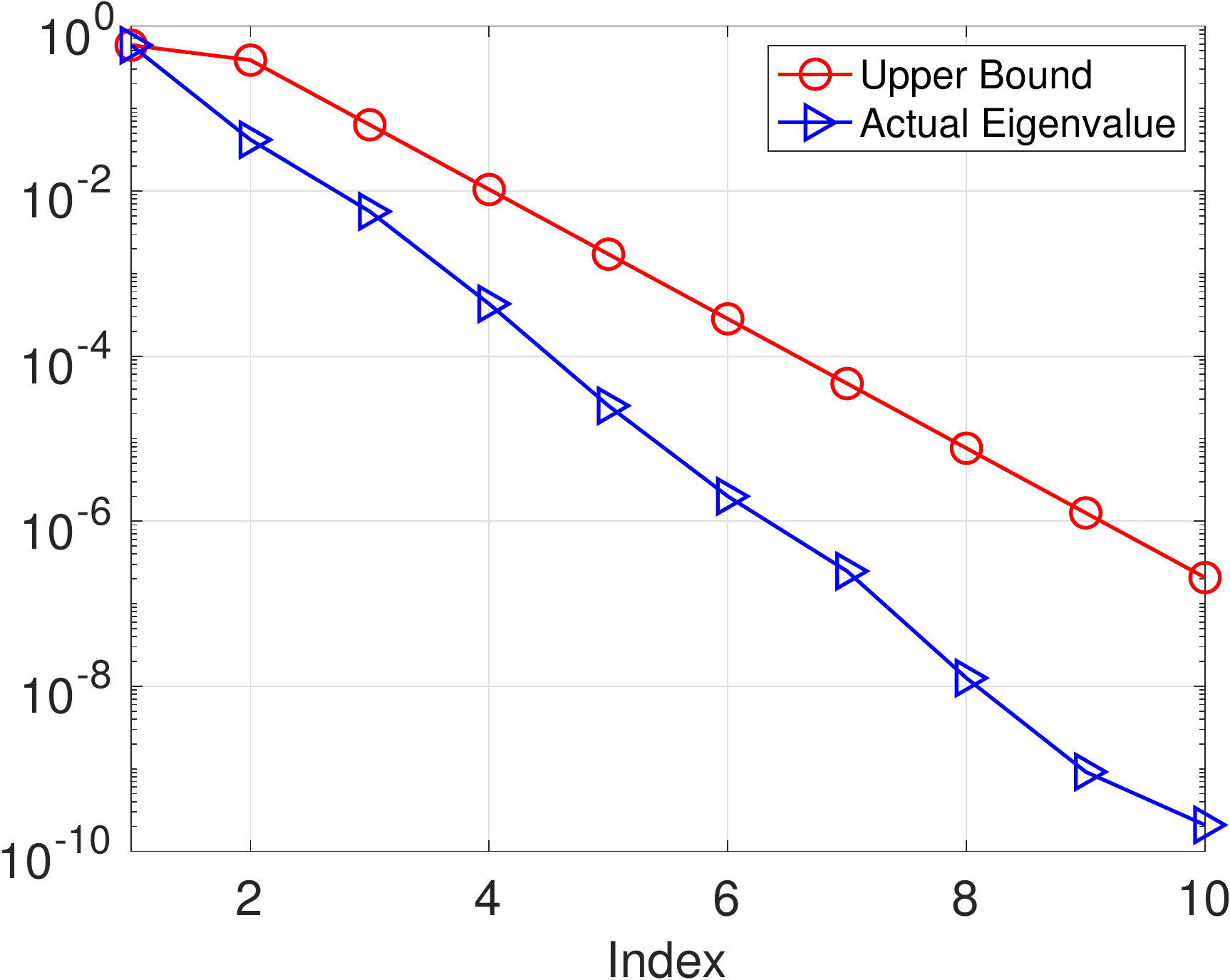} & \includegraphics[width=0.4\columnwidth]{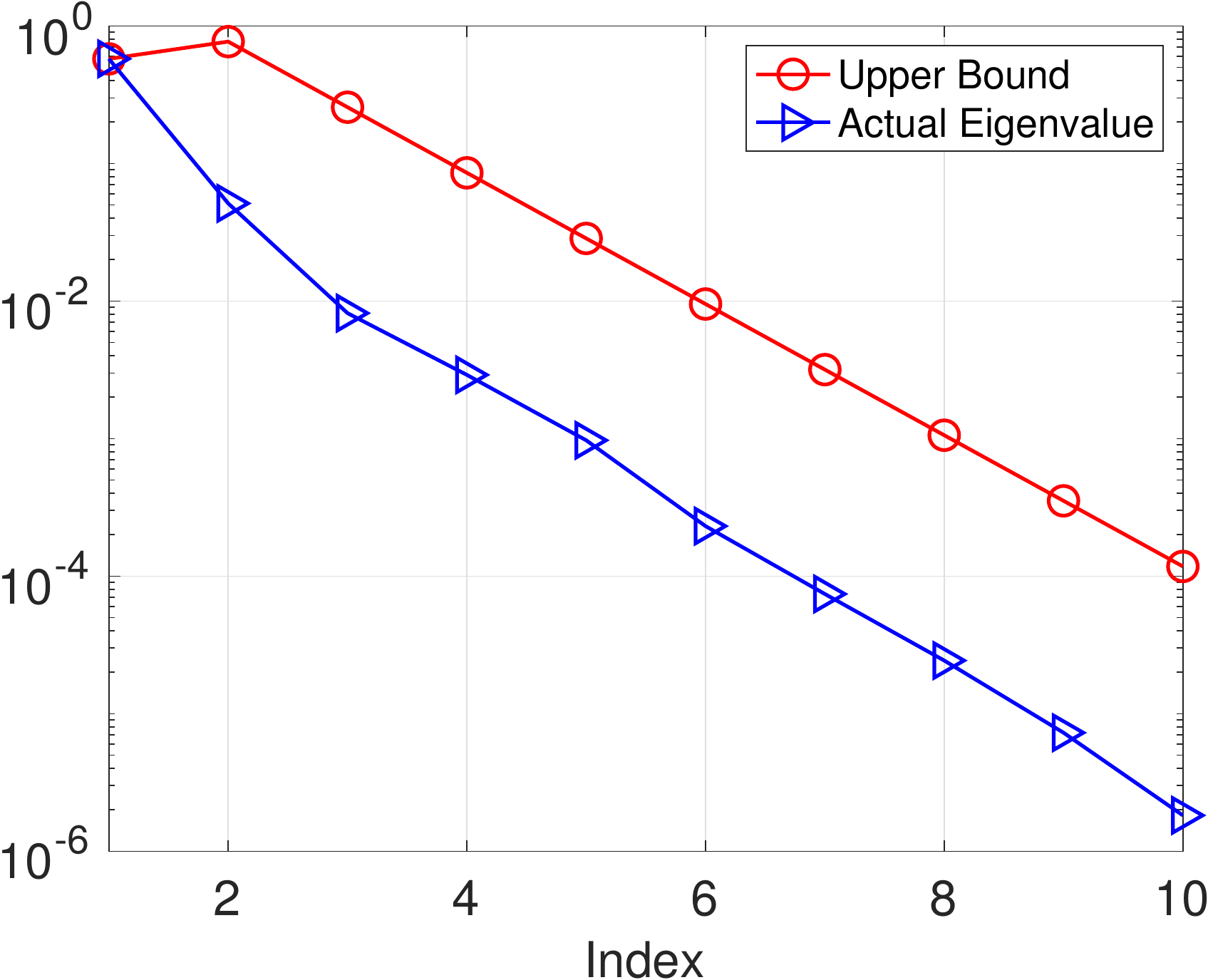}\tabularnewline
\end{tabular}
\par\end{centering}
\caption{\label{fig:bound-vs-reality}Illustration of the eigenvalue decay
bound of Theorem~\ref{prop:decay} vs. the actual decay observed
on two simple examples - on the left we sampled A entries uniformly
and on the right we used logspace sampling.}
\end{figure}

The theorem bounds the singular values. However, if $\matD$ is positive
semidefinite, then $\Delta$ is also positive semidefinite, and the
bound is actually on the eigenvalues. Figure~\ref{fig:bound-vs-reality}
illustrates the bound vs. actual decay of the eigenvalues on two simple
test cases. In both examples, $\matA\in\R^{100\times100}$ is a diagonal
matrix. In the left graph, the diagonal entries are sampled uniformly
from $U(0,1)$. In the right graph, diagonal entries are logarithmically
spaced between $10^{-3}$ and $10^{3}$. The perturbation is $\matD=\z\z^{\T}$,
where $\z$ is a normalized Gaussian vector.

We remark the previous to the aforementioned theoretical results of
~\citet{BT17}, it has been empirically observed that if the righthand
side of a Sylvester equation is low rank, then the solution is well
approximated using a low rank matrix~\citep{BK14}.

\subsection{Decay Bound for Inverse Square Root Corrections}

Observe that 
\begin{equation}
-\matA^{-1}\matD\matB^{-1}=\matB^{-1}-\matA^{-1}=\matA^{-\nicehalf}\Delta+\Delta\matB^{-\nicehalf}.\label{eq:sylvester_sqrt_inv}
\end{equation}
Since $\matD$ is rank $k$, so the matrix $-\matA^{-1}\matD\matB^{-1}$
is of rank at most $k$. Thus, similarly to Theorem~\ref{prop:decay},
by observing that $\TNorm{\matB^{-1}}\leq\TNorm{\matA^{-1}}\left(1+\TNorm{\matD}\TNorm{\matB^{-1}}\right)$
(which follows from $\matB^{-1}=\matA^{-1}-\matB^{-1}\matD\matA^{-1}$),
we can prove the following bound on the singular values of $\Delta$:
\[
\sigma_{j+kl}(\Delta)\leq4\left[\exp\left(\frac{\pi^{2}}{2\log(4\hat{\kappa})}\right)\right]^{-2l}\sigma_{j}(\Delta)
\]
where
\[
\hat{\kappa}=\frac{2(\sqrt{\lambda_{\min}(\matA)^{-1}(1+\TNorm{\matD}\lambda_{\min}(\matB)^{-1})}+\sqrt{\lambda_{\max}(\matA)^{-1}}/2)}{\sqrt{\lambda_{\max}(\matA)^{-1}}}
\]
We omit the proof since it is almost identical to the proof of Theorem~\ref{prop:decay}.

\section{Equation for Square Roots Corrections and Error Analysis}

Our algorithms are based on writing $\Delta$ as a solution of an
equation, and then finding a low-rank approximate solution $\tilde{\Delta}$.
Seemingly, Eqs.~(\ref{eq:sylvester_sqrt}) and (\ref{eq:sylvester_sqrt_inv})
are the equations we need. However, these equations contain the unknown
$\matB^{\nicebetahalf}$ so they are not useful for us algorithmically.
We derive a different equation instead. In particular, we write $\Delta$
as the solution of an algebraic Riccati equation, i.e. in the form
of Eq.~(\ref{eq:algebraic-ricatti}).

We can combine Eqs.~(\ref{eq:sylvester_sqrt}) and (\ref{eq:sylvester_sqrt_inv})
into a single equation:
\[
\matA^{\nicebetahalf}\Delta+\Delta\matB^{\nicebetahalf}=\matB^{\beta}-\matA^{\beta}
\]
Recalling that $\matB^{\nicebetahalf}=\matA^{\nicebetahalf}+\Delta$,
and plugging it into the last equation we get
\begin{equation}
\matA^{\nicebetahalf}\Delta+\Delta\matA^{\nicebetahalf}+\Delta^{2}=\matB^{\beta}-\matA^{\beta}\label{eq:pre-ricatti}
\end{equation}

In order for the equation to fit Eq.~(\ref{eq:algebraic-ricatti})
we must write the right side as a positive semi-definite factorized
matrix. The first step is finding a matrix $\matV\in\R^{n\times k}$
such that 
\[
\matB^{\beta}-\matA^{\beta}=\alpha\beta\matV\matV^{\T}
\]
When $\beta=1$, and recalling that in Problem~\ref{prob:main} we
have $\matB-\matA=\alpha\matZ\matZ^{\T}$, we can take $\matV=\matZ$.
When $\beta=-1$, we obtain $\matV$ using the Sherman-Morrison-Woodbury
formula. Indeed, 
\[
\matB^{-1}-\matA^{-1}=-\alpha\matA^{-1}\matZ(\matI+\alpha\matZ^{\T}\matA^{-1}\matZ)^{-1}\matZ^{\T}\matA^{-1}
\]
so we take $\matV=\matA^{-1}\matZ(\matI+\alpha\matZ^{\T}\matA^{-1}\matZ)^{-\nicehalf}$
(note that if $\alpha=-1$ the condition that $\matA-\matZ\matZ^{\T}$
is positive definite ensures that $\matI+\alpha\matZ^{\T}\matA^{-1}\matZ$
is positive definite and the inverse square root exists).

We now have the equation 
\[
\matA^{\nicebetahalf}\Delta+\Delta\matA^{\nicebetahalf}+\Delta^{2}=\alpha\beta\matV\matV^{\T}
\]
We do an additional change of variables to make the right-hand side
positive definite even if $\alpha\neq\beta$. Let $\matC=\alpha\beta\Delta$
(so $\matB^{1/2}=\matA^{1/2}+\alpha\beta\matC$ since $\alpha=\pm1$
and $\beta=\pm1$). By multiplying the last equation on both sides
by $\alpha\beta$ we obtain the equation:
\begin{equation}
\matA^{\nicebetahalf}\matC+\matC\matA^{\nicebetahalf}+\alpha\beta\matC^{2}=\matV\matV^{\T}\label{eq:ricatti-update}
\end{equation}
Except $\matC$, all other quantities of the last equation are known,
and solving Eq.~(\ref{eq:ricatti-update}) using a low-rank positive
semidefinite $\matC$ of the form $\matC=\matU\matU^{\T}$ forms the
basis of our algorithm (see next section). However, all our algorithms
solve Eq.~(\ref{eq:ricatti-update}) approximately, since they output
low-rank solutions, while the exact solution tends to be full-rank. 

We now analyze how errors in solving Eq.~(\ref{eq:ricatti-update})
translate to errors in approximating $\matB^{\beta/2}$. First, let
us define the residual of an approximate solution:
\[
\matR(\tilde{\matC})\coloneqq\matV\matV^{\T}-\matA^{\nicebetahalf}\tilde{\matC}-\tilde{\matC}\matA^{\nicebetahalf}-\alpha\beta\tilde{\matC}^{2}.
\]
We start with a backward error bound, i.e. showing that if the residual
has a small norm, then $\matA^{\nicebetahalf}+\alpha\beta\tilde{\matC}$
is the square root of a matrix that is close to $(\matA+\alpha\matZ\matZ^{\T})^{\beta}$.
\begin{lem}
\label{lem:backward}We have 
\[
\FNorm{(\matA+\alpha\matZ\matZ^{\T})^{\beta}-(\matA^{\nicebetahalf}+\alpha\beta\tilde{\matC})^{2}}=\FNorm{\matR(\tilde{\matC})}
\]
\end{lem}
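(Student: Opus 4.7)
The proof is essentially a direct algebraic expansion, with the only subtle point being careful bookkeeping of the sign $\alpha\beta$. The plan is as follows.

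First I would expand the square on the right-hand side. Since $\alpha,\beta \in \{\pm 1\}$, we have $(\alpha\beta)^2 = 1$, so
\[
(\matA^{\nicebetahalf} + \alpha\beta\tilde{\matC})^2
= \matA^{\beta} + \alpha\beta\bigl(\matA^{\nicebetahalf}\tilde{\matC} + \tilde{\matC}\matA^{\nicebetahalf}\bigr) + \tilde{\matC}^{2}.
\]
Next I would substitute this into the left-hand side of the claim and use the construction of $\matV$ established earlier in the section, namely $\matB^{\beta} - \matA^{\beta} = \alpha\beta\matV\matV^{\T}$ (which was the motivation for defining $\matV$ in both the $\beta=+1$ and $\beta=-1$ cases, the latter via Sherman--Morrison--Woodbury). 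This yields
\[
(\matA+\alpha\matZ\matZ^{\T})^{\beta} - (\matA^{\nicebetahalf} + \alpha\beta\tilde{\matC})^{2}
= \alpha\beta\matV\matV^{\T} - \alpha\beta\bigl(\matA^{\nicebetahalf}\tilde{\matC} + \tilde{\matC}\matA^{\nicebetahalf}\bigr) - \tilde{\matC}^{2}.
\]

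Now I would factor $\alpha\beta$ out of the right-hand side. Writing $-\tilde{\matC}^{2} = \alpha\beta\cdot(-\alpha\beta\tilde{\matC}^{2})$ again using $(\alpha\beta)^{2}=1$, we obtain
\[
(\matA+\alpha\matZ\matZ^{\T})^{\beta} - (\matA^{\nicebetahalf} + \alpha\beta\tilde{\matC})^{2}
= \alpha\beta\bigl[\matV\matV^{\T} - \matA^{\nicebetahalf}\tilde{\matC} - \tilde{\matC}\matA^{\nicebetahalf} - \alpha\beta\tilde{\matC}^{2}\bigr]
= \alpha\beta\,\matR(\tilde{\matC}).
\]
Finally, since $|\alpha\beta|=1$, taking the Frobenius norm of both sides gives the desired identity.

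There is no real obstacle here; the only thing to watch is that the sign conventions line up consistently when $\alpha$ and $\beta$ have opposite signs. The computation reduces to verifying that the cross term from expanding the square contributes exactly the Sylvester-like piece $\alpha\beta(\matA^{\nicebetahalf}\tilde{\matC} + \tilde{\matC}\matA^{\nicebetahalf})$ of $\matR$, that the quadratic term $\tilde{\matC}^{2}$ contributes the $\alpha\beta\tilde{\matC}^{2}$ piece (after factoring), and that the constant part $\matA^{\beta}$ cancels against the $\matB^{\beta}-\alpha\beta\matV\matV^{\T}$ identity.
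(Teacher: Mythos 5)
Your proof is correct and follows essentially the same route as the paper's: expand the square, substitute the defining identity $(\matA+\alpha\matZ\matZ^{\T})^{\beta}=\matA^{\beta}+\alpha\beta\matV\matV^{\T}$, and factor out $\alpha\beta$ (using $(\alpha\beta)^{2}=1$) to recognize $\alpha\beta\,\matR(\tilde{\matC})$, whose Frobenius norm equals that of $\matR(\tilde{\matC})$. Nothing is missing.
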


\begin{proof}
We have defined $\matV$ so that $(\matA+\alpha\matZ\matZ^{\T})^{\beta}=\matA^{\beta}+\alpha\beta\matV\matV^{\T}$,
so

\begin{eqnarray*}
\FNorm{(\matA+\alpha\matZ\matZ^{\T})^{\beta}-(\matA^{\nicebetahalf}+\alpha\beta\tilde{\matC})^{2}} & = & \FNorm{\matA^{\beta}+\alpha\beta\matV\matV^{\T}-(\matA^{\nicebetahalf}+\alpha\beta\tilde{\matC})^{2}}\\
 & = & \FNorm{\matA^{\beta}+\alpha\beta\matV\matV^{\T}-\matA^{\beta}-\alpha\beta\matA^{\beta/2}\tilde{\matC}-\alpha\beta\tilde{\matC}\matA^{\beta/2}-\tilde{\matC}^{2}}\\
 & = & \FNorm{\alpha\beta\matR(\tilde{\matC})}\\
 & = & \FNorm{\matR(\tilde{\matC})}
\end{eqnarray*}
\end{proof}
In order to get a bound on the forward error in terms of the backward
error, we need the following perturbation bound for the matrix square
root:
\begin{lem}
\label{lem:2.2}(\citet[Lemma 2.2]{Schmitt92}) Suppose that $\RE{\matA_{j}}\succeq\mu_{j}^{2}\matI$
, $\mu_{j}>0$, $j=1,2$. Then, both $\matA_{1}$ and $\matA_{2}$
have square roots satisfying $\RE{\matA_{j}^{1/2}}\succeq\mu_{j}\matI$
for j=1,2, and 
\[
\TNorm{\matA_{2}^{\nicehalf}-\matA_{1}^{\nicehalf}}\leq\frac{1}{\mu_{1}+\mu_{2}}\TNorm{\matA_{2}-\matA_{1}}
\]
\end{lem}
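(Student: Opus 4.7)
The plan is to express the difference $\matY \coloneqq \matA_2^{1/2} - \matA_1^{1/2}$ as the solution of a Sylvester equation and then control its norm via a semigroup estimate. Expanding $(\matA_j^{1/2})^2 = \matA_j$, a direct calculation gives the identity
\[
\matA_1^{1/2} \matY + \matY \matA_2^{1/2} = \matA_2 - \matA_1,
\]
so $\matY$ satisfies a Sylvester equation whose coefficients are the two square roots sitting on opposite sides.

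Before exploiting this equation I first need to establish the existence claim: that each $\matA_j$ admits a square root with $\RE{\matA_j^{1/2}} \succeq \mu_j \matI$. The hypothesis $\RE{\matA_j} \succeq \mu_j^2 \matI$ places the field of values, and in particular the spectrum, of $\matA_j$ strictly in the open right half-plane, so the principal square root is well-defined through the holomorphic functional calculus. To propagate the bound on the Hermitian part through the square root I would use an integral representation of the form
\[
\matA_j^{1/2} = \frac{2}{\pi} \int_0^\infty \matA_j (\matA_j + t^2 \matI)^{-1} \dd t,
\]
and bound the Hermitian part of the integrand in terms of $\RE{\matA_j}$, then integrate. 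I expect this step to be the main obstacle, since the Hermitian-part bound does not transfer transparently through the (nonlinear) square root when $\matA_j$ is non-normal; one has to argue carefully that the integrand remains accretive uniformly in $t$.

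Once the existence claim is in place, a standard log-norm argument gives $\TNorm{e^{-s \matA_j^{1/2}}} \leq e^{-s\mu_j}$ for all $s \geq 0$: differentiate $f(s) = \TNormS{e^{-s\matA_j^{1/2}} v}$ and use $\RE{\matA_j^{1/2}} \succeq \mu_j \matI$ to obtain $f'(s) \leq -2\mu_j f(s)$. Since both $\matA_j^{1/2}$ have spectrum in the open right half-plane, the Sylvester equation admits the classical integral representation
\[
\matY = \int_0^\infty e^{-s \matA_1^{1/2}} (\matA_2 - \matA_1)\, e^{-s \matA_2^{1/2}} \dd s,
\]
and substituting the semigroup bounds yields
\[
\TNorm{\matY} \leq \TNorm{\matA_2 - \matA_1} \int_0^\infty e^{-s(\mu_1+\mu_2)} \dd s = \frac{\TNorm{\matA_2-\matA_1}}{\mu_1+\mu_2},
\]
which is the desired inequality.
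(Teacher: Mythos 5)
The paper does not actually prove this lemma --- it is imported verbatim as \citet[Lemma 2.2]{Schmitt92} --- so you are reconstructing Schmitt's argument rather than competing with one in the text. Your second half is correct and complete: the identity $\matA_1^{\nicehalf}\matY+\matY\matA_2^{\nicehalf}=\matA_2-\matA_1$, the log-norm bound $\TNorm{e^{-s\matA_j^{\nicehalf}}}\leq e^{-s\mu_j}$, and the convergent integral representation of the Sylvester solution together give the stated inequality, provided the accretivity claim $\RE{\matA_j^{\nicehalf}}\succeq\mu_j\matI$ is in hand.

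That claim is exactly where your write-up stops: you name the integral representation and then say the uniform accretivity of the integrand is ``the main obstacle'' without resolving it, so as written the proof has a genuine gap at its load-bearing step. The gap is fillable along precisely the line you propose. Write $\matB=t^{2}\matI+\matA$ and use $\RE{\matB^{-1}}=\matB^{-1}\,\RE{\matB}\,\matB^{-*}$: for any $x$, putting $y=\matB^{-*}x$ gives $x^{*}\RE{\matB^{-1}}x=y^{*}\RE{\matB}y\leq\|y\|\,\TNorm{\matB^{*}y}=\|y\|\,\|x\|$, while $(t^{2}+\mu^{2})\|y\|^{2}\leq y^{*}\RE{\matB}y\leq\|y\|\,\|x\|$ forces $\|y\|\leq\|x\|/(t^{2}+\mu^{2})$; hence $\RE{(t^{2}\matI+\matA)^{-1}}\preceq(t^{2}+\mu^{2})^{-1}\matI$. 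Since $\matA(t^{2}\matI+\matA)^{-1}=\matI-t^{2}(t^{2}\matI+\matA)^{-1}$, the integrand satisfies $\RE{\matA(t^{2}\matI+\matA)^{-1}}\succeq\frac{\mu^{2}}{t^{2}+\mu^{2}}\matI$ uniformly in $t$, and
\[
\RE{\matA^{\nicehalf}}\;\succeq\;\frac{2}{\pi}\int_{0}^{\infty}\frac{\mu^{2}}{t^{2}+\mu^{2}}\,\dd t\;\matI\;=\;\mu\,\matI ,
\]
which is the existence claim (the representation itself is legitimate because $\RE{\matA}\succ0$ keeps the spectrum off $(-\infty,0]$). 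With this inserted, your proof is complete. An alternative, purely algebraic route to the same claim: with $\matS=\matA^{\nicehalf}$ one has $\RE{\matS}^{2}=\RE{\matS^{2}}-K^{2}\succeq\RE{\matA}\succeq\mu^{2}\matI$ where $K=\frac{1}{2}(\matS-\matS^{*})$ is skew-Hermitian, and a homotopy from $\matI$ to $\matA$ shows the eigenvalues of $\RE{\matS}$ stay positive, giving $\RE{\matS}\succeq\mu\matI$ without any integral.
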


\begin{lem}
\label{lem:forward}Let $\matW$ and $\matH$ be two symmetric positive
definite matrices. The following bounds hold:{\footnotesize{}
\[
\FNorm{\matW^{\nicebetahalf}-\matH}\leq(n^{\nicehalf}\FNorm{\matW^{\beta}-\matH^{2}})^{\nicehalf}
\]
\[
\TNorm{\matW^{\nicebetahalf}-\matH}\leq\min\left(\frac{\TNorm{\matW^{\beta}-\matH^{2}}}{\sqrt{\lambda_{\min}(\matW^{\beta})}},(n^{\nicehalf}\FNorm{\matW^{\beta}-\matH^{2}})^{\nicehalf}\right)
\]
}{\footnotesize\par}
\end{lem}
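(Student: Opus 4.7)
My plan is to handle the two inequalities of Lemma~\ref{lem:forward} separately, using the Powers--Stormer inequality for the Frobenius estimate and Lemma~\ref{lem:2.2} for the first term in the spectral minimum; the second term in that minimum then drops out of the Frobenius bound via the trivial inequality $\TNorm{\cdot}\leq\FNorm{\cdot}$.

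For the Frobenius bound, I would invoke the classical Powers--Stormer inequality, which asserts that for any two positive semidefinite matrices $\matP$ and $\matQ$,
\[
\FNorm{\matP^{1/2} - \matQ^{1/2}}^2 \leq \|\matP - \matQ\|_*,
\]
where $\|\cdot\|_*$ is the nuclear (Schatten-$1$) norm. I would apply this with $\matP := \matW^\beta$ and $\matQ := \matH^2$, both of which are positive definite since $\matW$ and $\matH$ are, and whose principal positive square roots are $\matW^{\nicebetahalf}$ and $\matH$ respectively. Combining with the standard relation $\|\matM\|_* \leq \sqrt{\rank(\matM)}\,\FNorm{\matM} \leq \sqrt{n}\,\FNorm{\matM}$ (Cauchy--Schwarz on singular values) and taking a square root produces the stated Frobenius estimate.

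For the first term in the spectral minimum, I would apply Lemma~\ref{lem:2.2} with $\matA_1 := \matW^\beta$ and $\matA_2 := \matH^2$. Since both are symmetric positive definite, $\RE{\matA_j} = \matA_j \succeq \lambda_{\min}(\matA_j)\matI$, so the admissible parameters are $\mu_1 := \sqrt{\lambda_{\min}(\matW^\beta)}$ and $\mu_2 := \sqrt{\lambda_{\min}(\matH^2)} = \lambda_{\min}(\matH) > 0$. The principal square roots are again $\matA_1^{1/2} = \matW^{\nicebetahalf}$ and $\matA_2^{1/2} = \matH$, so Lemma~\ref{lem:2.2} yields
\[
\TNorm{\matW^{\nicebetahalf} - \matH} \leq \frac{\TNorm{\matW^\beta - \matH^2}}{\sqrt{\lambda_{\min}(\matW^\beta)} + \lambda_{\min}(\matH)},
\]
and discarding the nonnegative $\lambda_{\min}(\matH)$ term from the denominator gives the first entry of the minimum.

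The main obstacle is Powers--Stormer itself. Its scalar analogue $(\sqrt{x}-\sqrt{y})^2 \leq |x-y|$ for nonnegative $x,y$ is immediate from $|\sqrt{x}-\sqrt{y}| \leq \sqrt{x}+\sqrt{y}$, and commuting PSD matrices reduce to the scalar case eigenvalue-by-eigenvalue; the non-commutative case, however, is more delicate and typically requires decomposing the Hermitian difference $\matP - \matQ$ into its positive and negative parts and exploiting trace-norm inequalities. I would invoke Powers--Stormer as a classical result rather than re-prove it.
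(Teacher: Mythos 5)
Your argument is correct, and the spectral-norm half coincides with the paper's: the first entry of the minimum comes from Lemma~\ref{lem:2.2} applied to $\matA_{1}=\matW^{\beta}$ and $\matA_{2}=\matH^{2}$ (you are more careful than the paper in identifying the admissible $\mu_{1}=\sqrt{\lambda_{\min}(\matW^{\beta})}$ and $\mu_{2}=\lambda_{\min}(\matH)$ before discarding the latter), and the second entry follows from $\TNorm{\cdot}\leq\FNorm{\cdot}$. Where you genuinely diverge is the Frobenius bound: the paper cites Wihler's H\"older-continuity inequality $\|\matX^{\nicefrac{1}{p}}-\matY^{\nicefrac{1}{p}}\|_{F}^{p}\leq n^{\nicefrac{(p-1)}{2}}\FNorm{\matX-\matY}$ with $p=2$, whereas you route through the Powers--St\o{}rmer inequality $\FNorm{\matP^{\nicehalf}-\matQ^{\nicehalf}}^{2}\leq\|\matP-\matQ\|_{*}$ followed by $\|\matM\|_{*}\leq\sqrt{\rank(\matM)}\,\FNorm{\matM}\leq\sqrt{n}\,\FNorm{\matM}$. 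The two routes land on exactly the same bound, so this is a legitimate substitute; its mild advantage is that the intermediate nuclear-norm estimate is dimension-free, so the factor $n^{\nicehalf}$ could be sharpened to $\rank(\matW^{\beta}-\matH^{2})^{\nicehalf}$ whenever the difference is low rank, which is potentially relevant downstream in Corollary~\ref{cor:err-bound}. Both versions outsource the essential content to a classical result (Wihler versus Powers--St\o{}rmer), and you are right not to re-prove the latter.
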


\begin{proof}
The bound $\TNorm{\matW^{\nicebetahalf}-\matH}\leq\TNorm{\matW^{\beta}-\matH^{2}}/\sqrt{\lambda_{\min}(\matW^{\beta})}$
follows immediately from Lemma~\ref{lem:2.2}. The Frobenius norm
bound follows from Wihler inequality the $p$th root of positive semidefinite
matrices: for any two $n\times n$ positive semidefinite matrix $\matX$
and $\matY$ and $p>1$ we have $\|\matX^{\nicefrac{1}{p}}-\matY^{\nicefrac{1}{p}}\|_{F}^{p}\leq n^{\nicefrac{(p-1)}{2}}\|\matX-\matY\|_{F}$~\citep{Wihler09}.
We apply this inequality to $\matX=\matW^{\beta}$ and $\matY=\matH^{2}$.
\end{proof}
Finally, we obtain the following bound:
\begin{cor}
\label{cor:err-bound}Suppose that both $\matA$ and $\matA+\alpha\matZ\matZ^{\T}$
are positive definite where $\alpha=\pm1$. Let $\tilde{\matC}$ be
a positive semidefinite matrix. Assume that $\matA^{\nicebetahalf}+\alpha\beta\tilde{\matC}$
is positive definite. The following bounds holds:

\[
\FNorm{(\matA+\alpha\matZ\matZ^{\T})^{\nicebetahalf}-(\matA^{\nicebetahalf}+\alpha\beta\tilde{\matC})}\leq\left(n^{1/2}\FNorm{\matR(\tilde{\matC})}\right)^{1/2}
\]
\[
\TNorm{(\matA+\alpha\matZ\matZ^{\T})^{\nicebetahalf}-(\matA^{\nicebetahalf}+\alpha\beta\tilde{\matC})}\leq\min\left\{ \frac{\FNorm{\matR(\tilde{\matC})}}{\sqrt{\lambda_{\min}((\matA+\alpha\matZ\matZ^{\T})^{\beta})}},\left(n^{\nicehalf}\FNorm{\matR(\tilde{\matC})}\right)^{\nicehalf}\right\} 
\]
\end{cor}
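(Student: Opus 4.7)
The plan is a direct two-step composition: I apply Lemma~\ref{lem:forward} with $\matW = \matA + \alpha\matZ\matZ^{\T}$ and $\matH = \matA^{\nicebetahalf} + \alpha\beta\tilde{\matC}$, then use Lemma~\ref{lem:backward} to rewrite the squared-difference term appearing on the right-hand side of Lemma~\ref{lem:forward} as $\FNorm{\matR(\tilde{\matC})}$.

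First, I would check that the hypotheses of Lemma~\ref{lem:forward} are satisfied. That lemma requires both of its arguments to be symmetric positive definite. By the hypothesis of the corollary, $\matA + \alpha\matZ\matZ^{\T}$ is positive definite (so $\matW$ is), and $\matA^{\nicebetahalf} + \alpha\beta\tilde{\matC}$ is assumed positive definite (so $\matH$ is). Since $\tilde{\matC}$ is symmetric (being positive semidefinite) and $\matA^{\nicebetahalf}$ is symmetric, $\matH$ is symmetric; likewise for $\matW$.

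Second, note that $\matW^{\nicebetahalf} = (\matA + \alpha\matZ\matZ^{\T})^{\nicebetahalf}$ and that $\matW^{\beta} - \matH^2 = (\matA + \alpha\matZ\matZ^{\T})^{\beta} - (\matA^{\nicebetahalf} + \alpha\beta\tilde{\matC})^2$. By Lemma~\ref{lem:backward}, the Frobenius norm of this difference is exactly $\FNorm{\matR(\tilde{\matC})}$. Plugging this into the Frobenius bound of Lemma~\ref{lem:forward} yields the first inequality in the corollary statement verbatim.

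Third, for the spectral bound, I again substitute $\matW$ and $\matH$ as above into Lemma~\ref{lem:forward}. The first term inside the $\min$ involves $\TNorm{\matW^{\beta} - \matH^2}$, which I bound by $\FNorm{\matW^{\beta} - \matH^2} = \FNorm{\matR(\tilde{\matC})}$ using the trivial inequality $\TNorm{\cdot} \leq \FNorm{\cdot}$ together with Lemma~\ref{lem:backward}; and I identify $\lambda_{\min}(\matW^{\beta}) = \lambda_{\min}((\matA+\alpha\matZ\matZ^{\T})^{\beta})$. The second term inside the $\min$ is handled exactly as in the Frobenius case. There is no real obstacle here: the entire argument is a mechanical composition of the two preceding lemmas, with the only care being verifying that the positive-definiteness hypotheses of Lemma~\ref{lem:forward} hold (which are precisely what we assumed in the statement of the corollary) and the routine replacement of a spectral norm by a Frobenius norm.
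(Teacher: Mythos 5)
Your proposal is correct and is exactly the intended argument: the corollary is stated in the paper as an immediate consequence of Lemma~\ref{lem:backward} and Lemma~\ref{lem:forward}, obtained by the substitution $\matW=\matA+\alpha\matZ\matZ^{\T}$, $\matH=\matA^{\nicebetahalf}+\alpha\beta\tilde{\matC}$. You also correctly spotted the one small non-trivial detail, namely that passing from the spectral norm $\TNorm{\matW^{\beta}-\matH^{2}}$ in Lemma~\ref{lem:forward} to the Frobenius norm $\FNorm{\matR(\tilde{\matC})}$ in the corollary's first min-term uses $\TNorm{\cdot}\leq\FNorm{\cdot}$ together with Lemma~\ref{lem:backward}.
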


\begin{prop}
Suppose that both $\matA$ and $\matA+\alpha\matZ\matZ^{\T}$ are
positive definite where $\alpha=\pm1,\beta=\pm1$. The matrix $\alpha\beta((\matA+\alpha\matZ\matZ^{\T})^{\nicebetahalf}-\matA^{\nicebetahalf})$
is a solution to Eq.~(\ref{eq:ricatti-update}). Conversely, if $\matC$
is a positive definite solution of Eq.~(\ref{eq:ricatti-update})
for which $\matA^{\nicebetahalf}+\alpha\beta\matC$ is positive definite
as well, then $\matA^{\nicebetahalf}+\alpha\beta\matC=(\matA+\alpha\matZ\matZ^{\T})^{\nicebetahalf}$.
\end{prop}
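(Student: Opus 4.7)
The plan is to handle the two directions separately, both of which reduce to careful algebraic manipulation combined with uniqueness of the principal square root.

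For the forward direction, I would let $\matB = \matA + \alpha\matZ\matZ^{\T}$, set $\Delta = \matB^{\beta/2} - \matA^{\beta/2}$, and $\matC = \alpha\beta\Delta$. Squaring $\matB^{\beta/2} = \matA^{\beta/2} + \Delta$ yields
\[
\matA^{\beta/2}\Delta + \Delta\matA^{\beta/2} + \Delta^{2} = \matB^{\beta} - \matA^{\beta}.
\]
Now substitute $\Delta = \alpha\beta\matC$ (so that $\Delta^{2}=\matC^{2}$ since $(\alpha\beta)^{2}=1$) and multiply both sides by $\alpha\beta$. The left-hand side becomes exactly $\matA^{\beta/2}\matC + \matC\matA^{\beta/2} + \alpha\beta\matC^{2}$, while the right-hand side becomes $\alpha\beta(\matB^{\beta}-\matA^{\beta})$, which by the very definition of $\matV$ given just before Eq.~(\ref{eq:ricatti-update}) equals $\matV\matV^{\T}$. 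Hence $\matC$ solves Eq.~(\ref{eq:ricatti-update}).

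For the converse, suppose $\matC$ is a positive definite solution of Eq.~(\ref{eq:ricatti-update}) and that $\matH \coloneqq \matA^{\beta/2} + \alpha\beta\matC$ is positive definite. I would compute
\[
\matH^{2} = \matA^{\beta} + \alpha\beta\bigl(\matA^{\beta/2}\matC + \matC\matA^{\beta/2}\bigr) + \matC^{2},
\]
again using $(\alpha\beta)^{2}=1$. Plugging in $\matA^{\beta/2}\matC + \matC\matA^{\beta/2} = \matV\matV^{\T} - \alpha\beta\matC^{2}$ from Eq.~(\ref{eq:ricatti-update}) yields the telescoping cancellation $\matH^{2} = \matA^{\beta} + \alpha\beta\matV\matV^{\T}$, which by construction of $\matV$ equals $\matB^{\beta}$. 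Since $\matH$ is positive definite by hypothesis and $\matB^{\beta}$ is positive definite (both $\matA$ and $\matB$ are positive definite by assumption, so $\matB^{\beta}$ is positive definite for $\beta=\pm1$), uniqueness of the principal square root forces $\matH = (\matB^{\beta})^{1/2} = \matB^{\beta/2}$, i.e., $\matA^{\beta/2} + \alpha\beta\matC = (\matA+\alpha\matZ\matZ^{\T})^{\beta/2}$.

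There is no real obstacle here since the proposition is essentially a sanity check that the algebraic manipulation used to derive Eq.~(\ref{eq:ricatti-update}) is reversible. The only point requiring mild care is the converse: one must remember that the identity $(\matB^{\beta})^{1/2} = \matB^{\beta/2}$ holds for $\beta=\pm 1$ and that the uniqueness of the principal square root applies only because both $\matH$ and $\matB^{\beta}$ are positive definite, which is exactly why the hypothesis on $\matA^{\beta/2} + \alpha\beta\matC$ is included in the statement.
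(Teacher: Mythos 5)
Your proof is correct. The forward direction is essentially identical to the paper's: both substitute $\matC=\alpha\beta\Delta$ into the identity $\matA^{\nicebetahalf}\Delta+\Delta\matA^{\nicebetahalf}+\Delta^{2}=\matB^{\beta}-\matA^{\beta}$ obtained by squaring $\matB^{\nicebetahalf}=\matA^{\nicebetahalf}+\Delta$, multiply by $\alpha\beta$, and invoke the defining property $(\matA+\alpha\matZ\matZ^{\T})^{\beta}=\matA^{\beta}+\alpha\beta\matV\matV^{\T}$. Where you genuinely diverge is the converse. The paper observes that a solution has $\matR(\matC)=0$ and then applies Corollary~\ref{cor:err-bound}, which bounds the forward error by the residual norm and therefore forces the forward error to vanish; that corollary in turn rests on Lemma~\ref{lem:backward} together with the perturbation machinery of Lemmas~\ref{lem:2.2} and~\ref{lem:forward} (Schmitt's bound and Wihler's inequality). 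You instead expand $\matH^{2}$ for $\matH=\matA^{\nicebetahalf}+\alpha\beta\matC$ directly, use the Riccati equation to cancel the $\matC^{2}$ terms, conclude $\matH^{2}=(\matA+\alpha\matZ\matZ^{\T})^{\beta}$, and finish by uniqueness of the principal square root of a positive definite matrix. Your route is more elementary and self-contained --- it avoids importing the quantitative perturbation bounds for a purely qualitative statement, and it makes explicit that the only hypothesis really doing work is positive definiteness of $\matH$ (positive definiteness of $\matC$ itself is not needed) --- while the paper's route has the virtue of reusing machinery it has already built and of exhibiting the proposition as the zero-residual limiting case of its error analysis. Both arguments are sound.
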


\begin{proof}
Let $\matC=\alpha\beta((\matA+\alpha\matZ\matZ^{\T})^{\nicebetahalf}-\matA^{\nicebetahalf})$.
By substation we have $\matA^{\nicebetahalf}\matC+\matC\matA^{\nicebetahalf}+\alpha\beta\matC^{2}=\alpha\beta(\matA+\alpha\matZ\matZ^{\T})^{\beta}-\alpha\beta\matA^{\beta}$.
Recall that we defined $\matV$ such that $(\matA+\alpha\matZ\matZ^{\T})^{\beta}=\matA^{\beta}+\alpha\beta\matV\matV^{\T}$
so we find that $\matA^{\nicebetahalf}\matC+\matC\matA^{\nicebetahalf}+\alpha\beta\matC^{2}=\matV\matV^{\T}$
and Eq.~(\ref{eq:ricatti-update}) holds.

Conversely, if $\matC$ is a positive definite solution of Eq.~(\ref{eq:ricatti-update})
then $\matR(\matC)=0$. Since $\matA^{\nicebetahalf}+\alpha\beta\matC$
is positive definite, Corollary~\ref{cor:err-bound} ensures that
$\FNorm{(\matA+\alpha\matZ\matZ^{\T})^{\nicebetahalf}-(\matA^{\nicebetahalf}+\alpha\beta\tilde{\matC})}\leq0$.
This can only happen if $(\matA+\alpha\matZ\matZ^{\T})^{\nicebetahalf}-(\matA^{\nicebetahalf}+\alpha\beta\tilde{\matC})=0$,
i.e. $\matA^{\nicebetahalf}+\alpha\beta\matC=(\matA+\alpha\matZ\matZ^{\T})^{\nicebetahalf}$.
\end{proof}

\section{\label{sec:algorithms}Algorithms}

In this section we describe our algorithms for solving Problem~\ref{prob:main}.
As alluded earlier, our algorithms are based on using a Riccati low-rank
solver as encapsulated by $\textrm{RicattiLRSolver}$. However, in
some combinations of $\alpha$ and \textbf{$\beta$ }there is a challenge:
while the returned $\tilde{\matC}$ is guaranteed to be positive definite,
there is no guarantee that $\matA^{\nicebetahalf}+\alpha\beta\tilde{\matC}$
is positive definite as well. Such guarantee is necessary in order
to apply Corollary~\ref{cor:err-bound}. Thus, we split our algorithm
to various cases based on the combination of $\alpha$ and $\beta$.
Our proposed algorithms are summarized in pseudo-code form in Algorithm~\ref{alg:main}.

\begin{algorithm}
\begin{algorithmic}[1]

\STATE \textbf{Inputs: $\matA^{\nicehalf}\in\R^{n\times n}$} and/or
\textbf{$\matA^{-\nicehalf}\in\R^{n\times n}$} implicitly,\textbf{
$\alpha=\pm1,\beta=\pm1$},\textbf{ $\matZ\in\R^{n\times k}$}, target
rank $r$.

\STATE \textbf{Output: $\matU\in\R^{n\times r}$ }such that\\
 $(\matA+\alpha\matZ\matZ^{\T})^{\nicebetahalf}\approx\matA^{\nicebetahalf}+\alpha\beta\matU\matU^{\T}$.

\STATE 

\STATE \uline{\mbox{$\alpha=+1,\beta=+1$}}: (only $\matA^{\nicehalf}$
required)

\STATE $\matU\gets\textrm{RiccatiLRSolver(}\matA^{\nicehalf},\matZ^{\T},+1,r)$

\STATE 

\STATE \uline{\mbox{$\alpha=-1,\beta=-1$}}: (only $\matA^{-\nicehalf}$
required)

\STATE $\matG\gets\matA^{-\nicehalf}\matZ$

\STATE Verify $\matI_{k}-\matG^{\T}\matG$ is positive definite (o/w
return error)

\STATE $\matV\gets\matA^{-\nicehalf}\matG(\matI_{k}-\matG^{\T}\matG)^{-\nicehalf}$

\STATE $\matU\gets\textrm{RiccatiLRSolver}(\matA^{-\nicehalf},\matV^{\T},+1,r)$

\STATE 

\STATE \uline{\mbox{$\alpha=-1,\beta=+1$}}:

\STATE Execute the $\alpha=-1,\beta=-1$ case to obtain $\matU_{1}$.

\STATE $\matU\gets\matA^{\nicehalf}\matU_{1}(\matI_{r}+\matU_{1}^{\T}\matA^{\nicehalf}\matU_{1})^{-\nicehalf}$

\STATE 

\STATE \uline{\mbox{$\alpha=+1,\beta=-1$}}:

\STATE Execute the $\alpha=+1,\beta=+1$ case to obtain $\matU_{1}$.

\STATE $\matU\gets\matA^{-\nicehalf}\matU_{1}(\matI_{r}+\matU_{1}^{\T}\matA^{-\nicehalf}\matU_{1})^{-\nicehalf}$

\end{algorithmic}

\caption{\label{alg:main}Algorithms for updating/downdating square root and
inverse square root.}
\end{algorithm}

\subsection{Updating ($\alpha=1$) the square root ($\beta=1$)}

This is the simplest case: we simply call $\matU\gets\textrm{RiccatiLRSolver}(\matA^{\nicehalf},\matZ^{\T},+1,r)$
and return $\matU$.

\subsection{Downdating ($\alpha=-1)$ the inverse square root ($\beta=-1$)}

We first compute $\matV=\matA^{-1}\matZ(\matI_{k}-\matZ^{\T}\matA^{-1}\matZ)^{-\nicehalf}$.
Along the way we can verify that $\matI-\matZ^{\T}\matA^{-1}\matZ$
is positive definite, which is required for $\matA-\matZ\matZ^{\T}$
to be positive semidefinite, and our algorithm to work. We now call
$\matU\gets\textrm{RiccatiLRSolver}(\matA^{-\nicehalf},\matV^{\T},+1,r)$
and return $\matU$.

\subsection{Downdating ($\alpha=-1$) the square root ($\beta=1$)}

Seemingly, we could simply call $\matU\gets\textrm{RiccatiLRSolver}(\matA^{\nicehalf},\matZ^{\T},-1,r)$
and return $\matU$. However, there is no guarantee that $\matA^{\nicehalf}-\matU\matU^{\T}$
is positive definite, and Corollary~\ref{cor:err-bound} no longer
guarantees that we have an approximation to the principal square root.

If we want to approximate the principal square root, we can first
solve for downdating the inverse square root ($\alpha=-1,\beta=-1$),
obtaining $\matU_{1}$ such that $(\matA-\matZ\matZ^{\T})^{-\nicehalf}\approx\matA^{-\nicehalf}+\matU_{1}\matU_{1}^{\T}$.
We now use the Sherman-Morrison-Woodbury formula to note that 
\[
(\matA^{-\nicehalf}+\matU_{1}\matU_{1}^{\T})^{-1}=\matA^{\nicehalf}-\matA^{\nicehalf}\matU_{1}(\matI_{r}+\matU_{1}^{\T}\matA^{\nicehalf}\matU_{1})^{-1}\matU_{1}^{\T}\matA^{\nicehalf}
\]
so we return $\matU=\matA^{\nicehalf}\matU_{1}(\matI_{r}+\matU_{1}^{\T}\matA^{\nicehalf}\matU_{1})^{-\nicehalf}$.

\subsection{Updating ($\alpha=1$) the inverse square root ($\beta=-1$)}

Again, calling the Riccati solver directly might return a corrected
matrix which it not neccessarily positive definite, and it will not
be a good approximation to the principal square root. To approximate
the principal square root, we first solve the updating problem for
the square root $(\matA+\matZ\matZ^{\T})^{\nicehalf}\approx\matA^{\nicehalf}+\matU_{1}\matU_{1}^{\T}$
($\alpha=+1,\beta=+1$), and then use the Sherman-Morrison-Woodbury
formula to find a $\matU$ such that $(\matA^{\nicehalf}+\matU_{1}\matU_{1}^{\T})^{-1}=\matA^{-\nicehalf}-\matU\matU^{\T}$.
We omit the details and simply refer the reader to the pseudo code
description in Algorithm~\ref{alg:main}.

\subsection{Costs}

The main cost of the algorithms is in solving the Riccati equation.
Even when the Sherman-Morrison-Woodbury formula is needed to ensure
positive definiteness of the correction, its cost of $O(nk^{2})$
is subsumed by the cost of solving the Riccati equation. Overall,
under our assumptions on the cost of solving the Riccati equation,
the overall cost of the algorithms is $O((T_{\matA^{\nicehalf}}+T_{\matA^{-\nicehalf}})r^{2}+nr^{4})$
where $T_{\matA^{\nicehalf}}$ and $T_{\matA^{-\nicehalf}}$ are the
costs of the taking products of $\matA^{\nicehalf}$ and $\matA^{-\nicehalf}$
(respectively) with a vector. In many of the applications we discuss
in the next section $\matA$ is diagonal, in which case the cost of
the algorithms reduces to $O(nr^{4})$.

\section{\label{sec:applications}Applications}

\subsection{ZCA Whitening of High Dimensional Data}

\emph{Whitening transformations} are designed to transform a random
vector (or samples of that random vector) with a known covariance
matrix into a new random vector whose covariance is the identity matrix.
Suppose that $\x\in\R^{p}$ is a random vector with covariance matrix
$\Sigma\in\R^{p\times p}$. Let $\matW$ be any matrix such that $\matW^{\T}\matW=\Sigma^{-1}$;
such a matrix is called a \emph{whitening matrix}. Then the covariance
matrix of the random vector $\z=\matW\x$ is the identity, so the
random variable has been \emph{whitened}. There are several possible
choices for $\matW$, leading to different whitening transformations.
ZCA whitening is the whitening transformation defined by $\matW=\Sigma^{-\nicehalf}$.

In practice, the ZCA transformation is learned from data. Given samples
$\x_{1},\dots,\x_{n}$ an estimate $\hat{\Sigma}$ of $\Sigma$ is
formed, and $\hat{\matW}=\hat{\Sigma}^{-\nicehalf}$ is used for the
ZCA whitening matrix. A common choice is to use the sample covariance
matrix $\matS_{n}=n^{-1}\matX_{c}^{\T}\matX_{c}$ for $\hat{\Sigma}$
where $\matX_{c}$ is the data matrix whose rows are $\x_{1},\dots,$$\x_{n}$
after centering (subtraction of the mean).

However, it is well appreciated in the statistical literature that
when the random vectors are high dimensional, i.e. when $p$ is of
the same order as $n$ (or much larger), then the sample covariance
$\matS_{n}$ is a poor estimate of $\Sigma$. Indeed, one can easily
see that if $p>n$ then $\matS_{n}$ is not even invertible so the
ZCA transformation is not even defined. In high dimensional settings
it is common to adopt the \emph{spiked covariance model }of~\citet{Johnstone01}.
In the spiked covariance model it is assumed that the covariance matrix
has the following form:
\begin{equation}
\Sigma=\sigma^{2}\matI_{p}+\matZ\matZ^{\T}\label{eq:spiked}
\end{equation}
 for some $\matZ\in\R^{p\times k}$ ($k$ is a parameter).

Suppose that we have formed an estimate $\hat{\Sigma}$ of $\Sigma$
with the same structure as in Eq.~(\ref{eq:spiked}), and we want
to transform the samples using ZCA whitening. Explicitly computing
the square root of $\hat{\Sigma}$ requires $O(p^{3})$, which is
prohibitive when $p$ is large, and additional $O(p^{2}n)$ is required
for applying the transformation to the data. Instead, we can use our
algorithm to find a $\matU\in\R^{n\times r}$ with $r=O(k)$ such
that 
\[
\hat{\Sigma}^{-\nicehalf}\approx\sigma^{-1}\matI-\matU\matU^{\T}
\]
We can then apply the ZCA transformation in $O(npk)$.

\subsection{Updating/Downdating Polar Decomposition and ZCA Transformed Data}

Given a matrix $\matX\in\mathbb{R}^{n\times d}$ where $n\geq d$,
a polar decomposition of it is 
\[
\matX=\matU\matP
\]
where $\matU$ has orthonormal columns and $\matP$ is symmetric positive
semidefinite. The matrix $\matP$ is always unique, and is given by
$\matP=(\matX^{\T}\matX)^{\nicehalf}$. If $\matX$ has full rank,
then $\matP$ is positive definite, and $\matU=\matX\matP^{-1}$.
The polar decomposition can be computed using a reduced SVD, so the
cost of computing a polar decomposition is $O(nd^{2})$. There are
quite a few uses for the polar decomposition \citep{Higham86}.

We now consider the following updating/downdating problem. Let us
denote the rows of $\matX$ by $\x_{1},\dots,\x_{n}\in\R^{d}$, i.e.
row $j$ of $\matX$ is $\x_{j}^{\T}$. Suppose we already have a
polar decomposition $\matX=\matU\matP$ of $\matX$. The \emph{downdating
problem} is: compute the polar decomposition of a matrix $\matX_{-}$
obtained by removing one row from $\matX$. The\emph{ updating problem}
is: compute the polar decomposition of $\matX_{+}$, a matrix obtained
by adding a single row to $\matX$.

We describe an algorithm for downdating a polar decomposition. The
updating algorithm is almost the same. Without loss of generality,
assume we remove the last row from $\matX$:
\[
\matX_{-}^{\T}\matX_{-}=\matX^{\T}\matX-\x_{n}\x_{n}^{\T}\,.
\]
This is a rank-1 perturbation of $\matX^{\T}\matX$. The $\matP$-factor
of $\matX_{-}$, which we denote by $\matP_{-}$, is obtained by computing
the square root of $\matX_{-}^{\T}\matX_{-}$, and we already have
the square root $\matP$ for the unperturbed matrix $\matX^{\T}\matX$.
So we can use the algorithm described in Section~\ref{sec:algorithms}
to find a matrix $\matU\in\R^{d\times k}$ for some small $r$ (a
parameter; e.g., $k=4)$ such that 
\[
\matP_{-}\approx\tilde{\matP}_{-}\coloneqq\matP-\matU\matU^{\T}
\]
Assume now that $\matX_{-}$ is full rank as well. We now have 
\[
\matU_{-}\approx\tilde{\matU}_{-}\coloneqq\matX_{-}\tilde{\matP}_{-}^{-1}
\]
Using the Sherman-Morrison-Woodbury formula: 
\[
\tilde{\matP}_{-}^{-1}=\matP^{-1}+\matP^{-1}\matU(\matI_{r}-\matU^{\T}\matP^{-1}\matU)^{-1}\matU^{\T}\matP^{-1}
\]
so
\[
\tilde{\matU}_{-}=\matX_{-}\matP^{-1}(\matI_{r}+\matU(\matI_{k}-\matU^{\T}\matP^{-1}\matU)^{-1}\matU^{\T}\matP^{-1})
\]
Now notice that $\matX_{-}\matP^{-1}$ is just the first $n-1$ rows
of $\matX\matP^{-1}=\matU$ so we do not need to recompute it. Multiplying
$\matX_{-}\matP^{-1}$ by $(\matI_{d}+\matU(\matI_{r}-\matU^{\T}\matP^{-1}\matU)^{-1}\matU^{\T}\matP^{-1})$
can be done, utilizing the low rank structure of that matrix, using
$O(ndr)$ operations. If $r\ll n$ this is a big reduction in complexity
over $O(nd^{2})$.

The polar decomposition is closely connected to ZCA whitening. Suppose
that $\matX$ is a data matrix whose rows are sampled from a zero
mean random vector (or, alternatively, $\matX$ has been centered).
The $\matU$-factor is equal, up to scaling, to the ZCA transformed
data, while the inverse of the $\matP$-factor is, up to scaling,
the ZCA whitening matrix itself. Using the ability to update the inverse
square root, the procedure for updating/downdating the polar decomposition
can be adjusted to update/downdate ZCA. Updating/downdating ZCA can
be useful if you want to transform data that arrives over time while
the covariance matrix itself changes slowly. That is, data point $\x_{j}$
is sampled with covariance $\Sigma_{j}$. If we assume the covariance
changes slowly, we can keep an approximate ZCA of the data by taking
the sample covariance over a sliding window. To do so efficiently,
we can use the proposed ZCA updating/downdating procedure to first
remove outdated data (a downdate operation), and then add the newly
arrived data (and update operation).

\subsection{Sampling from a Multivariate Normal Distribution with Perturbed Precision
Matrix}

Consider a random vector $\x\in\R^{n}$ following a multivariate normal
distribution with precision matrix $\matQ$, i.e. $\x\sim N(\mu,\matQ^{-1}).$
Suppose we want to sample $\x$. This can be accomplished by sampling
a vector $\z$ from the standard multivariate normal distribution
(i.e. $\z\sim N(0,\matI_{n})$) and then computing the sample $\x=\mu+\matQ^{-\nicehalf}\z$.

In certain cases the matrix $\matQ$ has the structure of a low-rank
perturbation of a fixed precision matrix, i.e. $\matQ=\matQ_{0}+\matZ\matZ^{\T}$.
Assuming we already have computed the inverse square root of $\matQ_{0}$,
we can use our algorithms to compute a $\matU$ such that $\matQ^{-\nicehalf}\approx\matQ_{0}^{-\nicehalf}-\matU\matU^{\T}$.
We can then sample efficiently from $\x$.

Such cases can occur in Gibbs Samplers for Bayesian inference on spatially
structured data. An example is the image reconstruction task discussed
in \citep{Bardsley12}. The computational bottleneck in the algorithm
proposed in \citep{Bardsley12} is sampling from a conditional Gaussian
distribution whose precision matrix has the structure $\matQ=\gamma_{\textrm{prior}}\matL+\gamma_{\textrm{obs}}\matZ\matZ^{\T}$
where $\matL$ is a fixed discrete Laplace operator that encodes prior
smoothness assumptions on the image, while $\matZ^{\T}$ encodes how
the high-resolution images are blurred and downsampled to yield low-resolution
images.

\subsection{\label{subsec:shampoo_app}Preconditioned Second-Order Optimization}

Recently introduced by \citet{GKS18}, \noun{Shampoo} is a preconditioned
second-order optimization method for solving problems in which the
parameter space is naturally organized as a $m\times n$ matrix or
higher order tensor. Here, we consider the matrix-shaped case. In
this case, at the core, \noun{Shampoo} performs update steps of the
form 
\begin{equation}
\matW_{t+1}\gets\matW_{t}-\eta\matL_{t}^{-\nicefrac{1}{4}}\matG_{t}\matR_{t}^{-\nicefrac{1}{4}}\label{eq:shampoo-update}
\end{equation}
where $\eta$ is the learning rate, $\{\matW_{t}\}$ are the parameters
at time $t$, $\{\matG_{t}\}$ are the gradients at time $t$, and
\[
\matL_{t}\coloneqq\epsilon\matI_{m}+\sum_{s=1}^{t}\matG_{s}\matG_{s}^{\T}\quad\matR_{t}\coloneqq\epsilon\matI_{n}+\sum_{s=1}^{t}\matG_{s}^{\T}\matG_{s}.
\]

If $m\gg nt$ then it is better to avoid holding $\matL_{t}^{-\nicefrac{1}{4}}$
explicitly (which is $m\times m$ ), and simply hold an implicit representation
of both $\matL_{t}^{-\nicefrac{1}{4}}$ and $\matL_{t}^{-\nicehalf}$
as diagonal plus low-rank matrices. In each iteration we can update
both by applying our algorithm twice. Since $n\ll m$, we can store
$\matR_{t}$ explicitly, and compute $\matR_{t}^{-\nicefrac{1}{4}}$
in each iteration. If $n\gg mt$ we can reverse roles, implicitly
keeping $\matR_{t}$ and explicitly keeping $\matL_{t}$. Even if
both $m$ and $n$ are of comparable size, then in some cases $\matG_{t}$
is of low-rank, and again we can track $\matL_{t}$ and $\matR_{t}$
using our algorithm.

We stress that our method has an advantage over~\citep{FHL22} when
applied to SHAMPOO. \citet{FHL22} can only handle perturbations of
the identity, so when applied to compute the square root of $\matL_{t+1}$
it cannot use the square root of $\matL_{t}$ (which is available
from the previous iteration). Our algorithm, on the other hand, can
use the fact that $\matL_{t+1}=\matL_{t}+\matG_{t}\matG_{t}^{\T}$
for a low rank update of the previous iteration. If the matrices are
explicitly held, \citet{FHL22} will have a cost per iteration that
grows linearly with iteration count, while with our algorithm the
cost will stay constant.

\subsection{Faster Generalized Least Squares with a Spiked Weight Matrix}

Let $\matX\in\R^{n\times d}$ and $\y\in\R^{n}$. In Generalized Least
Squares (GLS) we wish to find the minimizer
\begin{equation}
\min_{\w\in\R^{d}}\WNorm{\matX\w-\b}\label{eq:gls}
\end{equation}
where $\matW\in\R^{n\times n}$ is some symmetric positive definite
weight matrix, and $\WNorm{\z}\coloneqq\sqrt{\z^{\T}\matW\z}$. In
this section, we focus on the cases that $\matW$ can be written as
a diagonal plus a definite low rank perturbation $\matW=\matD+\alpha\matZ\matZ^{\T}$
where $\matD\in\R^{n\times n}$ is diagonal, and $\matZ\in\R^{n\times k}$.

A statistical motivation for this problem is generalized linear regression
with a spiked covariance matrix. Assume that the rows of $\matX$
correspond to data points $\x_{1},\dots,\x_{n}$, the entries $y_{1},\dots,y_{n}$
of $\y$ are responses. We now assume that the responses follow the
model $y_{i}=\x_{i}^{\T}\w^{\star}+\epsilon_{i}$ where the vector
of noise elements $\epsilon_{1},\dots,\epsilon_{m}$ is distributed
according to ${\cal N}(0,\matC)$ for some covariance matrix $\matC=\matD+\alpha\matZ\matZ^{\T}$.
The optimal unbiased estimate of $\w^{\star}$ is obtained by solving
Eq.~(\ref{eq:gls}) with $\matW=\matC^{-1}$.

One can easily see that Eq.~(\ref{eq:gls}) is equivalent to 
\begin{equation}
\min_{\w\in\R^{d}}\TNorm{\matW^{\nicehalf}\matX\w-\text{\ensuremath{\matW}}^{\nicehalf}\y}\label{eq:ls-converted}
\end{equation}
Once we have efficiently computed $\matW^{\nicehalf}\matX$ and $\matW^{\nicehalf}\y$,
we can leverage faster, sketching based, least squares algorithms~\citep{Woodruff14,DrineasMahoney16}.
Using the algorithms from Section~\ref{sec:algorithms} we can compute
a $\matU\in\R^{n\times r}$ such that $\matW^{\nicehalf}=\matD^{-\nicehalf}-\alpha\matU\matU^{\T}$
with $r=O(k).$ We can them compute $\matW^{\nicehalf}\matX$ and
$\matW^{\nicehalf}\y$ efficiently.

\section{Experiments}

We report experiments exploring the ability of our algorithm to find
low-rank corrections to the square root or inverse square root of
a perturbed matrix. In our experiments, we focus on the quality of
the corrections found. We do not report running time since the code
we used for the algebraic Riccati solver (downloaded from the homepage
of~\citet{BV14}) is not optimized to take advantage of the structures
present in the input matrices for the specific algebraic Riccati equations
our algorithm solves.

\subsection{\label{subsec:syn}Synthetic Experiments}

We first test our algorithm on randomly generated matrices, and compare
them to the approximations obtained using the algorithm of~\citet{BKS18},
the approximation obtained using the algorithm of~\citet{BCKS20},
and the optimal correction obtained by zeroing out the smallest eigenvalues
of the exact correction. The matrix $\matA\in\R^{100\times100}$ is
a diagonal matrix, whose diagonal is either sampled uniformly from
$U(0,1)$, or whose entries are logarithmically spaced between $10^{-3}$
and $10^{3}$. The perturbation is $\matD=\z\z^{\T}$, where $\z$
is a normalized Gaussian vector. We consider both updates ($\alpha=+1$)
and downdates $(\alpha=-1$). In case of downdates, we multiply $\z$
by $0.1$ to ensure positive definiteness after the downdate. We consider
both the square root ($\beta=+1)$ and inverse square root ($\beta=-1$).
We plot the relative error $\FNorm{(\matA+\alpha\z\z^{\T})^{\nicebetahalf}-(\matA^{\nicebetahalf}+\alpha\beta\matU\matU^{\T})}/\FNorm{(\matA+\alpha\z\z^{\T})^{\nicebetahalf}}$
as a function of the rank of the update (\#columns in $\matU$).

We consider the algorithm of~\citet{BKS18} applied in two different
ways. The first, which is labeled in the graphs as ``Krylov Method'',
simply runs the algorithm of algorithm of~\citet{BKS18} for $r$
iterations ($r$ is the target rank), to obtain a rank $r$ perturbation.
In the second, which is labeled in the graphs as ``Truncated $r^{2}$
Krylov Method'', runs the algorithm of algorithm of~\citet{BKS18}
for $r^{2}$ iterations, but then computes the best rank $r$ approximation
to the correction (which is of rank $r^{2}$). This algorithm has
the same asymptotic cost for diagonal matrices as our algorithm.

We implemented the algorithm of~\citet{BCKS20} using the Rational
Krylov Toolbox for MATLAB\footnote{\url{http://guettel.com/rktoolbox/guide/html/index.html}}.
The poles are obtained using that toolbox as well. The algorithm is
labeled in the graphs as ``Rational Krylov Method''.

\begin{figure}[h]
\begin{centering}
\begin{tabular}{cccc}
\includegraphics[width=0.2\columnwidth]{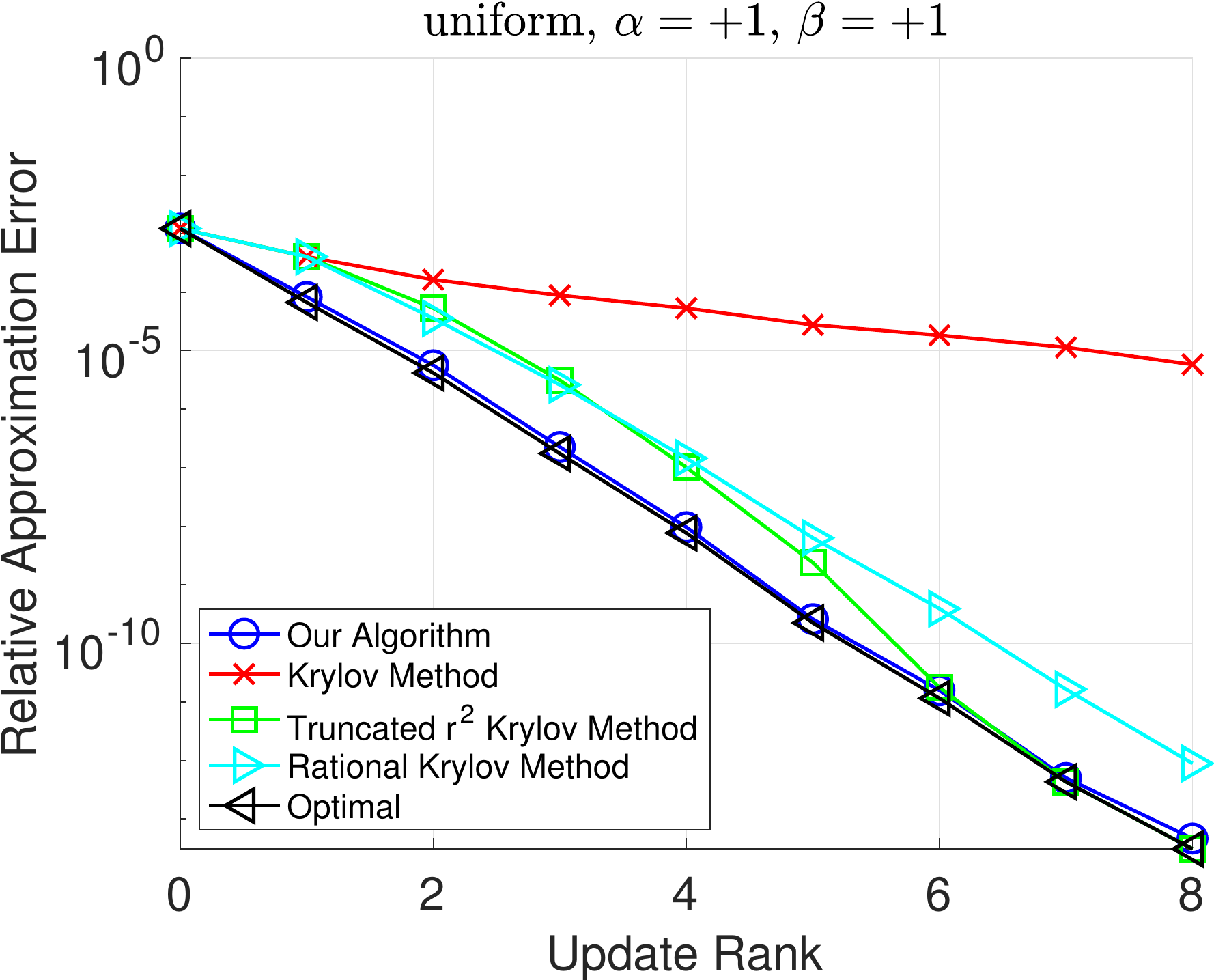} & \includegraphics[width=0.2\columnwidth]{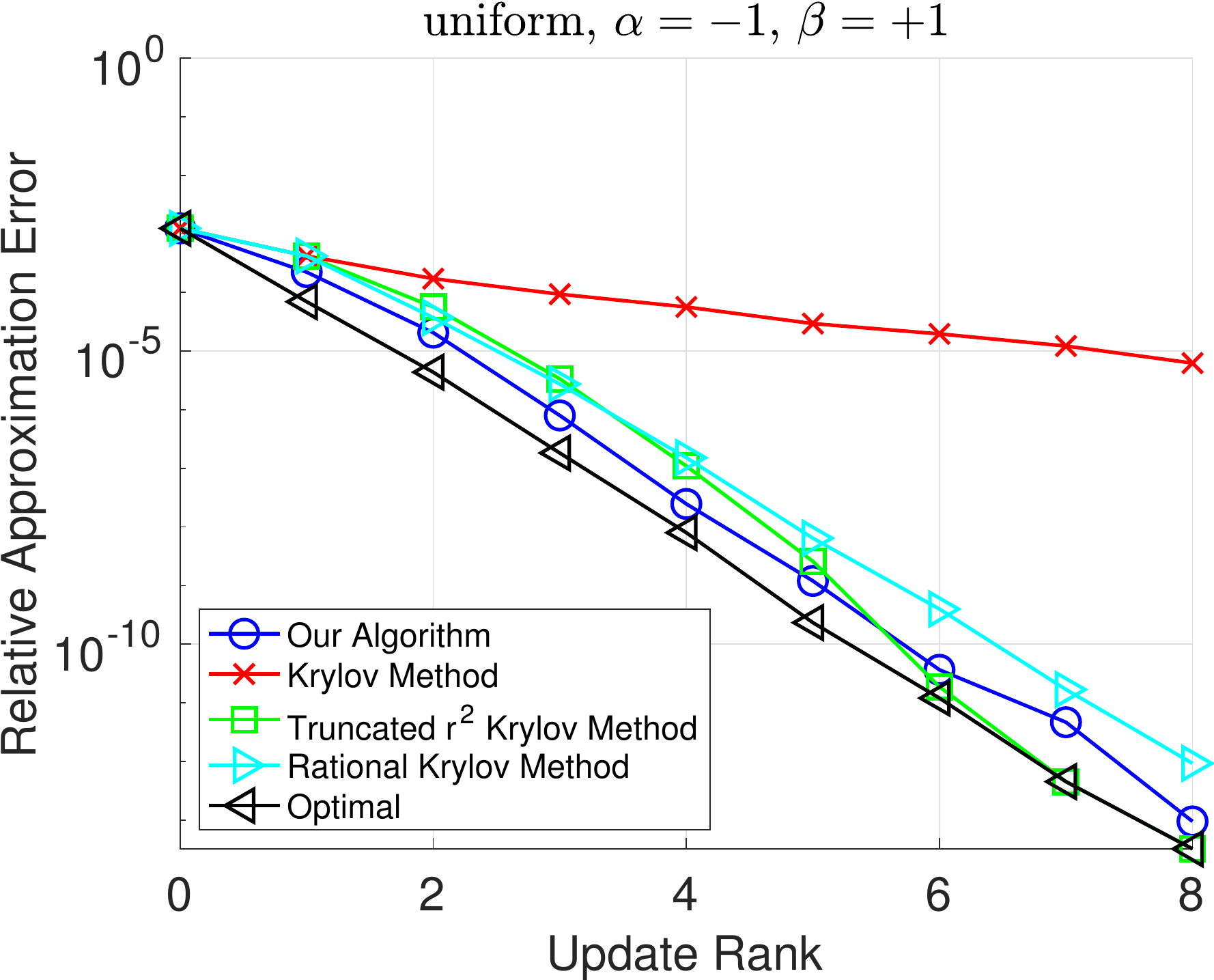} & \includegraphics[width=0.2\columnwidth]{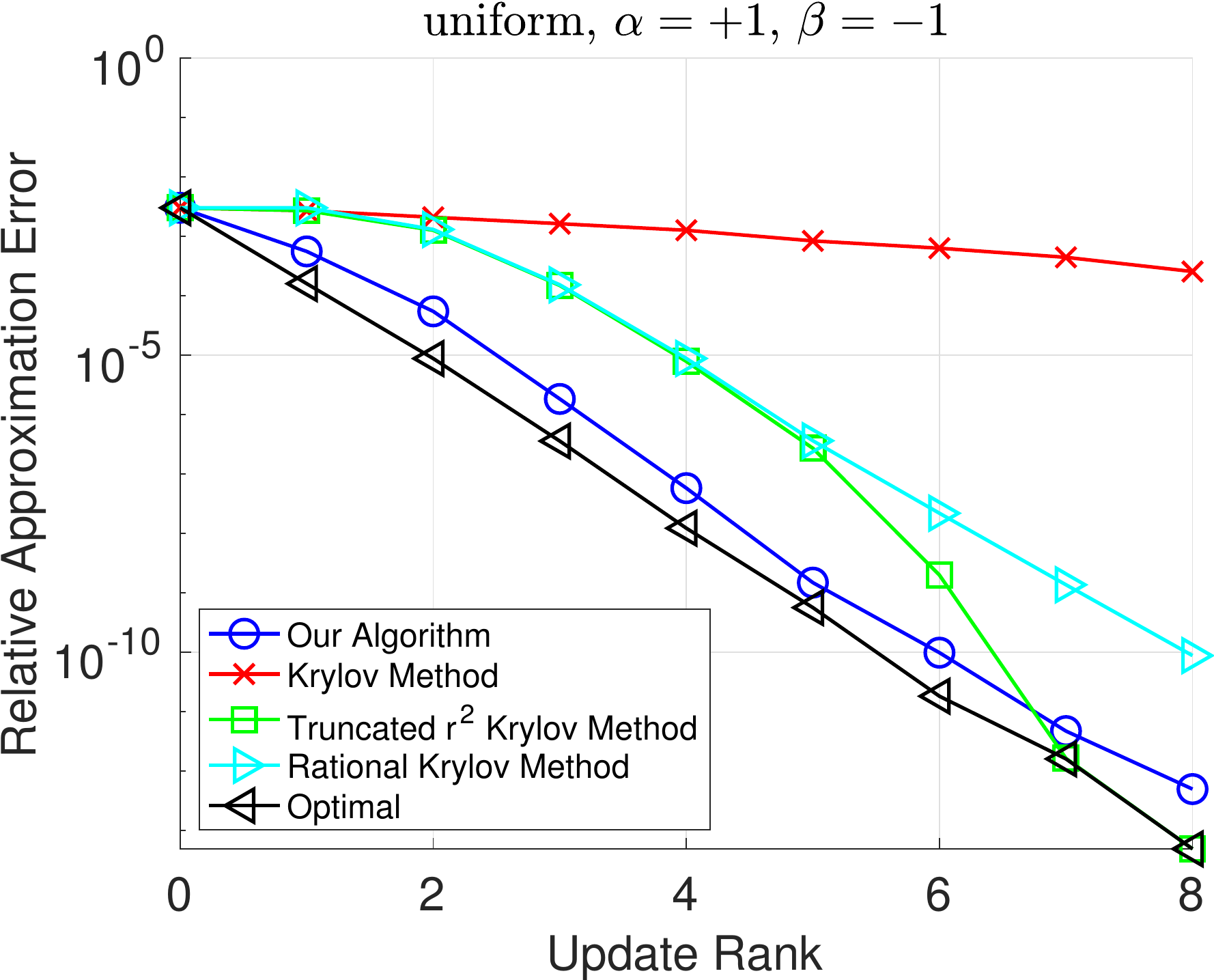} & \includegraphics[width=0.2\columnwidth]{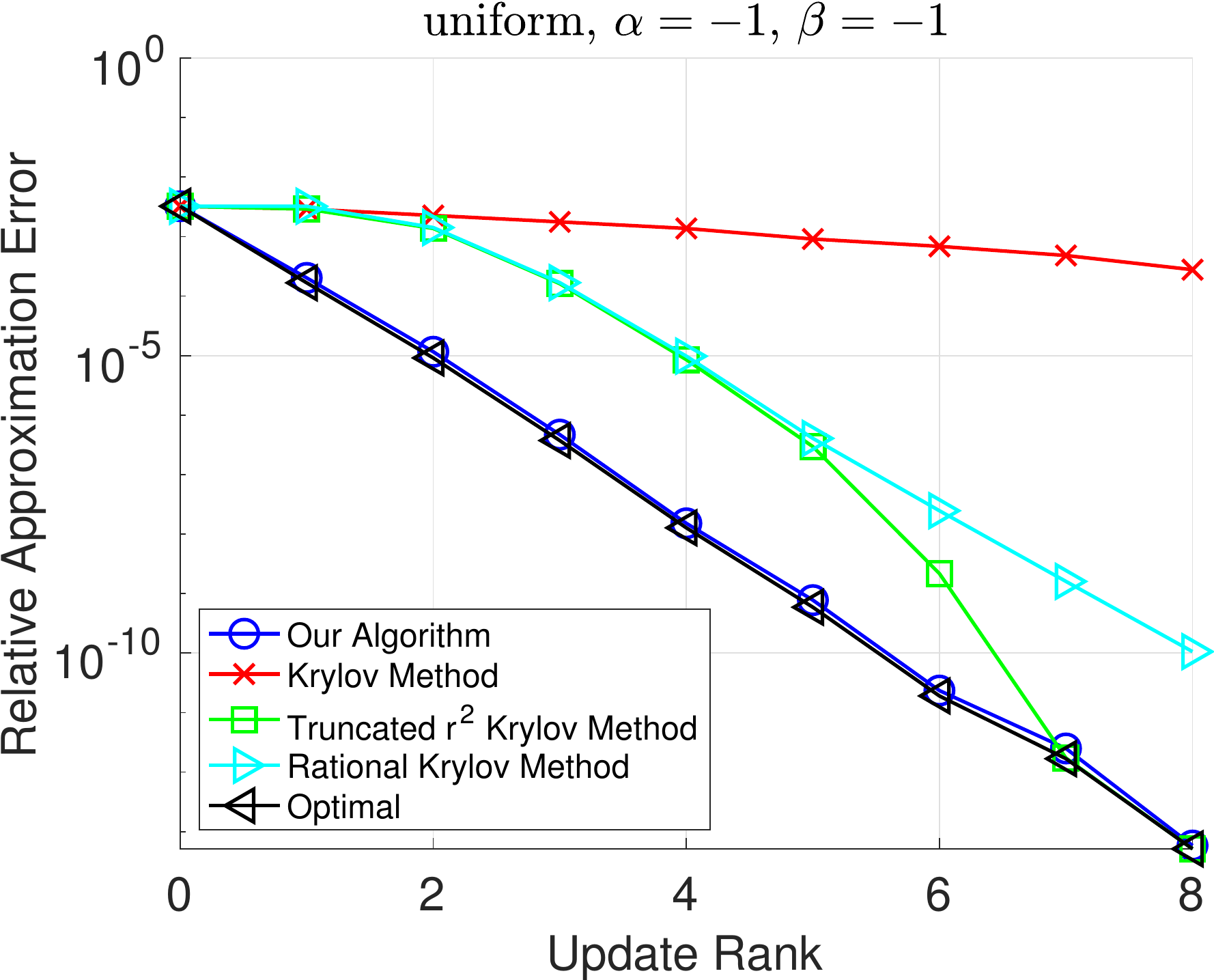}\tabularnewline
\end{tabular}
\par\end{centering}
\caption{\label{fig:app_syn_experiment_uniform}Comparison of our algorithm
to approximations obtained using~\citet{BKS18} and to the optimal
approximation, on randomly generated matrices of the form $\protect\matD+\protect\z\protect\z^{\protect\T}$
where $\protect\matD$ is diagonal with uniformly sampled entries
from $N(0,1)$.}
\end{figure}

\begin{figure}[h]
\begin{centering}
\begin{tabular}{cccc}
\includegraphics[width=0.2\columnwidth]{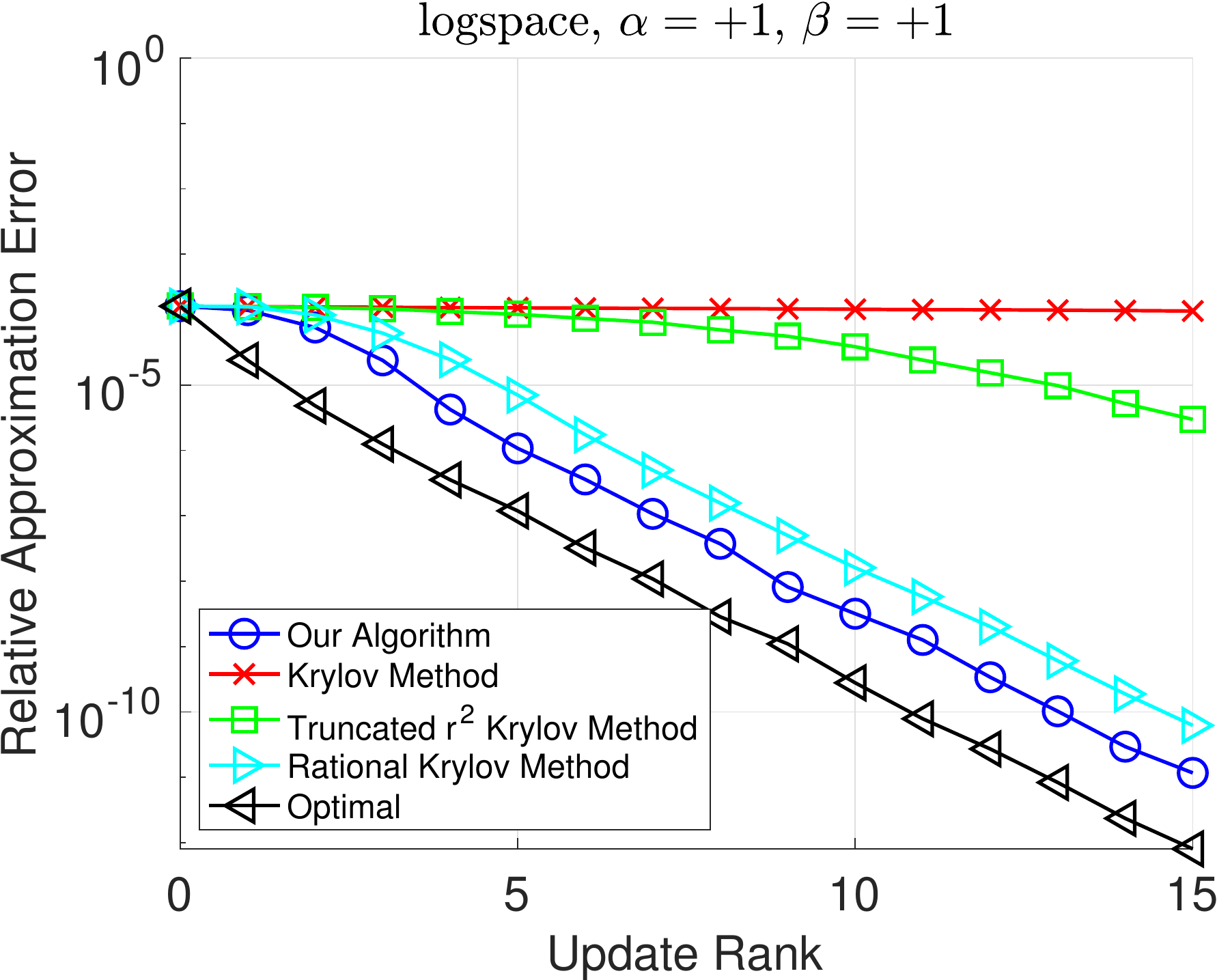} & \includegraphics[width=0.2\columnwidth]{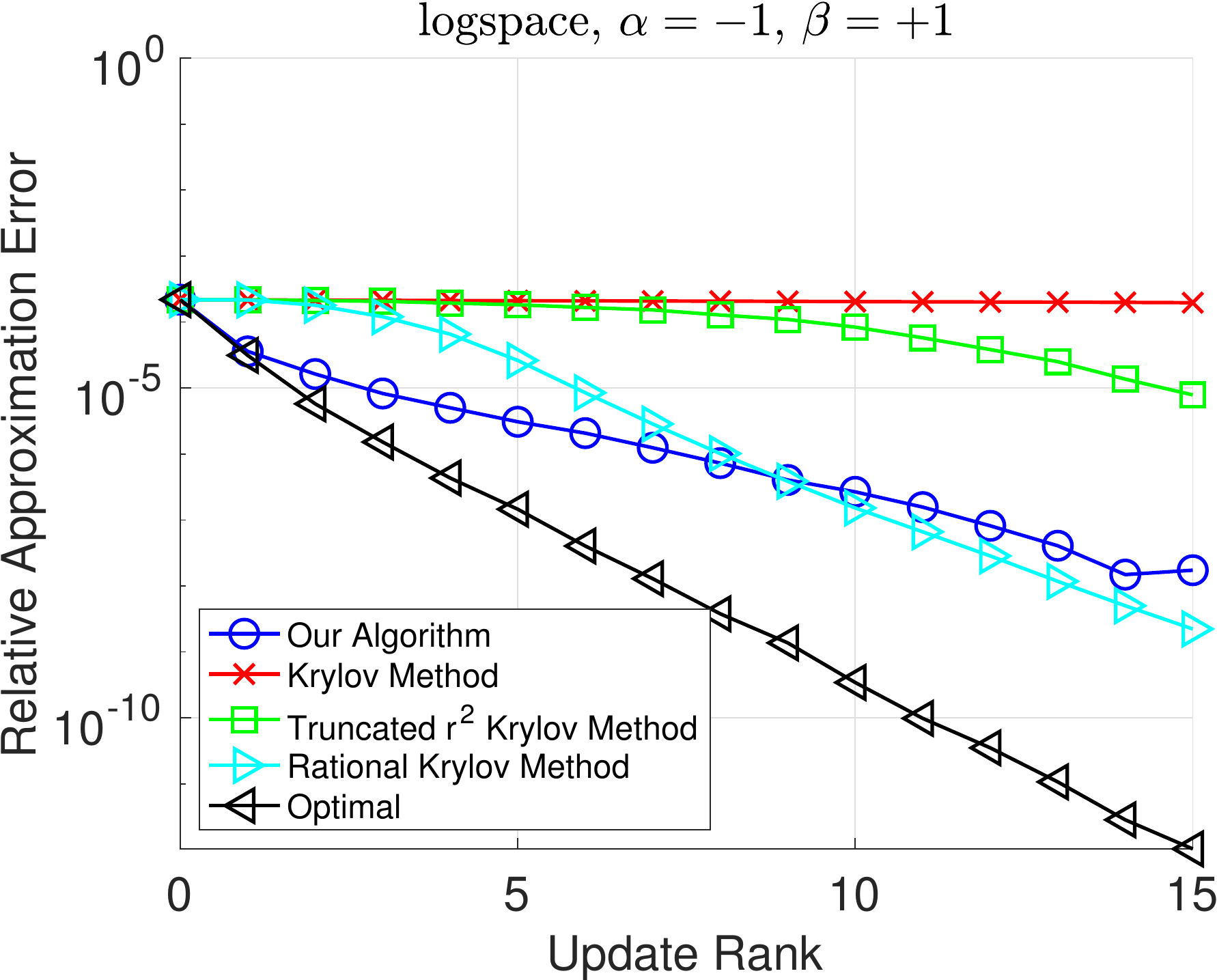} & \includegraphics[width=0.2\columnwidth]{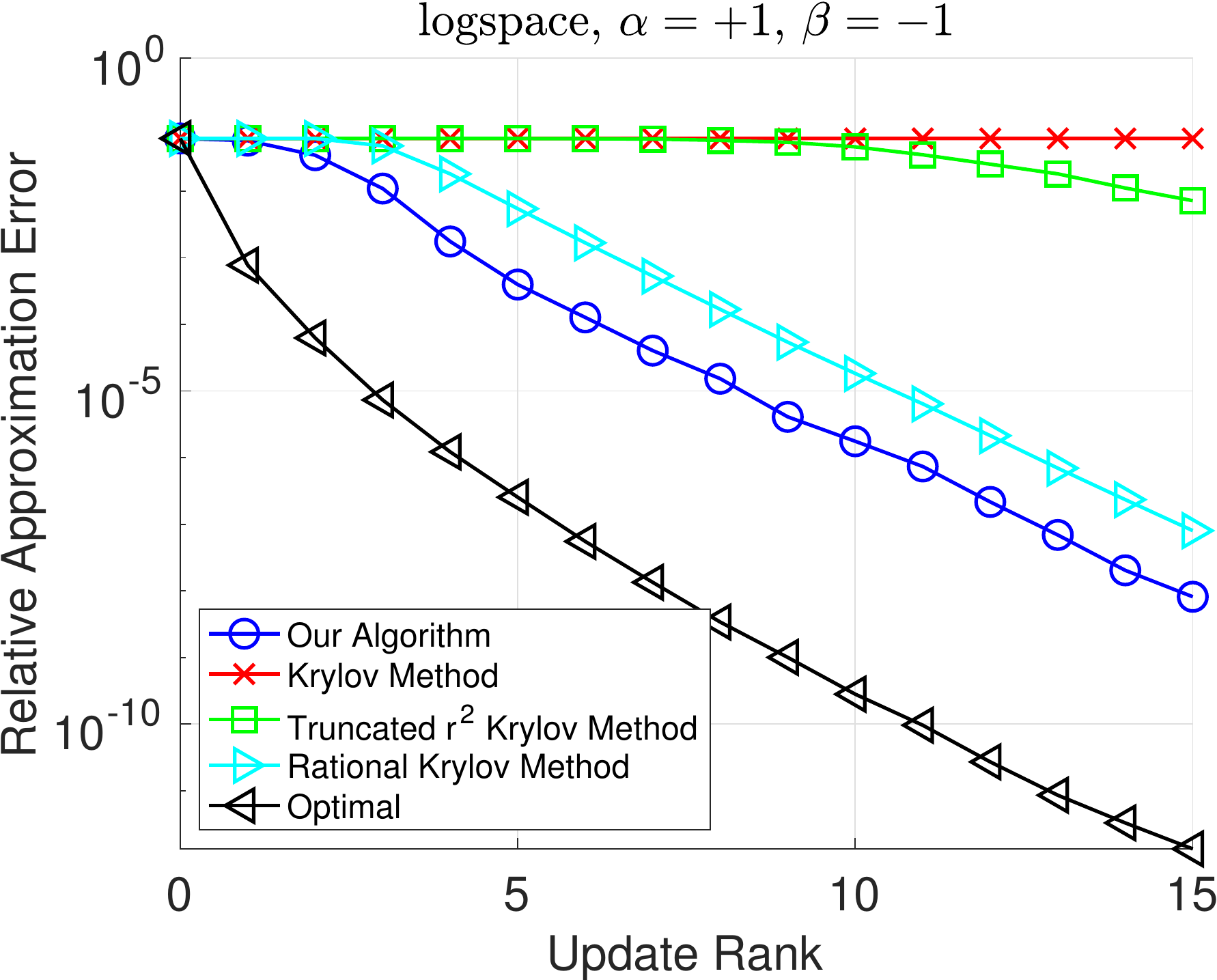} & \includegraphics[width=0.2\columnwidth]{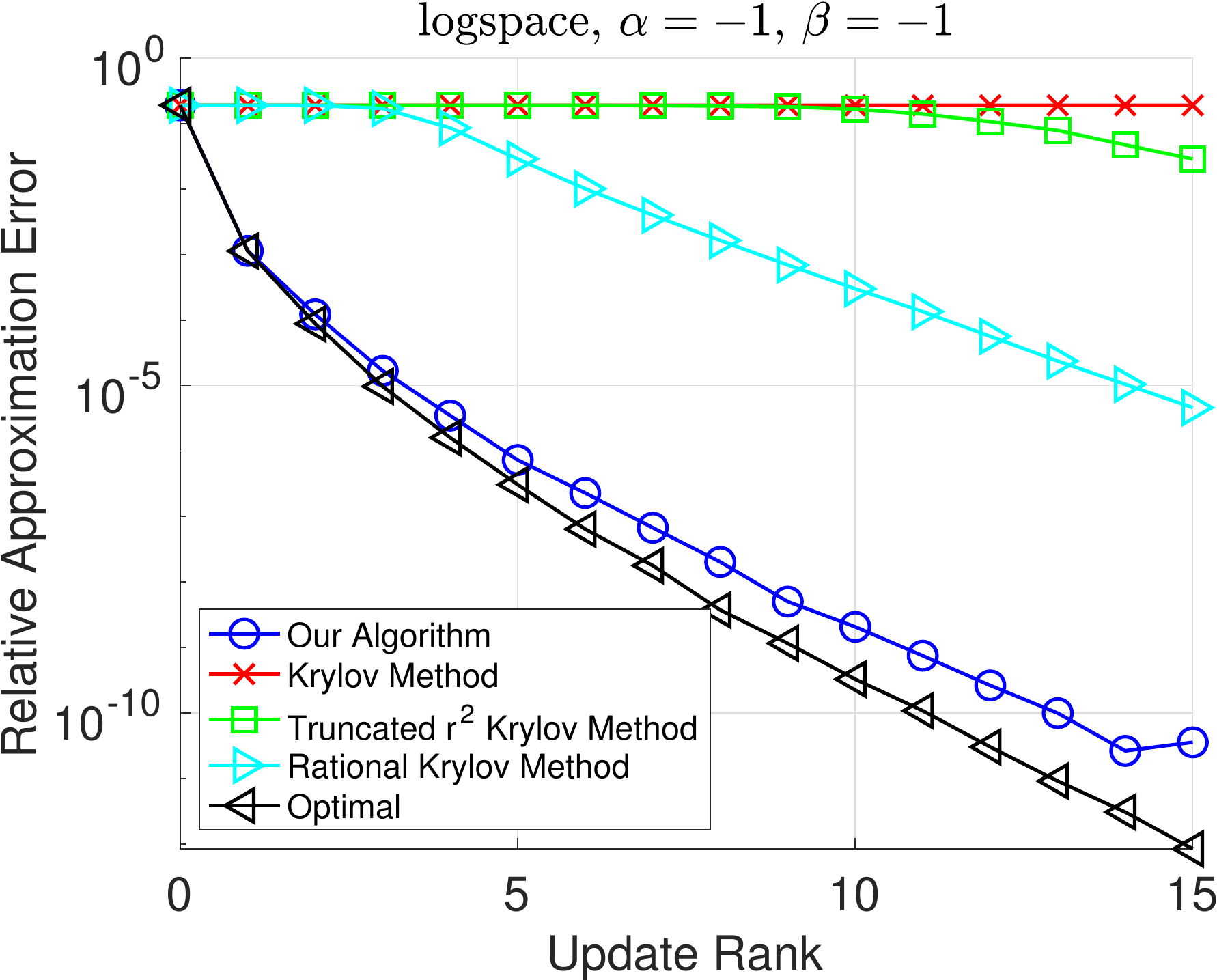}\tabularnewline
\end{tabular}
\par\end{centering}
\caption{\label{fig:app_syn_experiment_logspace}Comparison of our algorithm
to approximations obtained using~\citet{BKS18} and to the optimal
approximation, on randomly generated matrices of the form $\protect\matD+\protect\z\protect\z^{\protect\T}$
where $\protect\matD$ is diagonal with logarithmically spaced entries.}
\end{figure}

Figures~\ref{fig:app_syn_experiment_uniform} and~\ref{fig:app_syn_experiment_logspace}
show the result for all different combinations. Our algorithm is clearly
able to find much better approximations than the Krylov method of~\citet{BKS18}.

\subsection{Matrices Arising from Second-Order Optimization}

\begin{figure}
\begin{centering}
\begin{tabular}{cc}
\includegraphics[width=0.4\columnwidth]{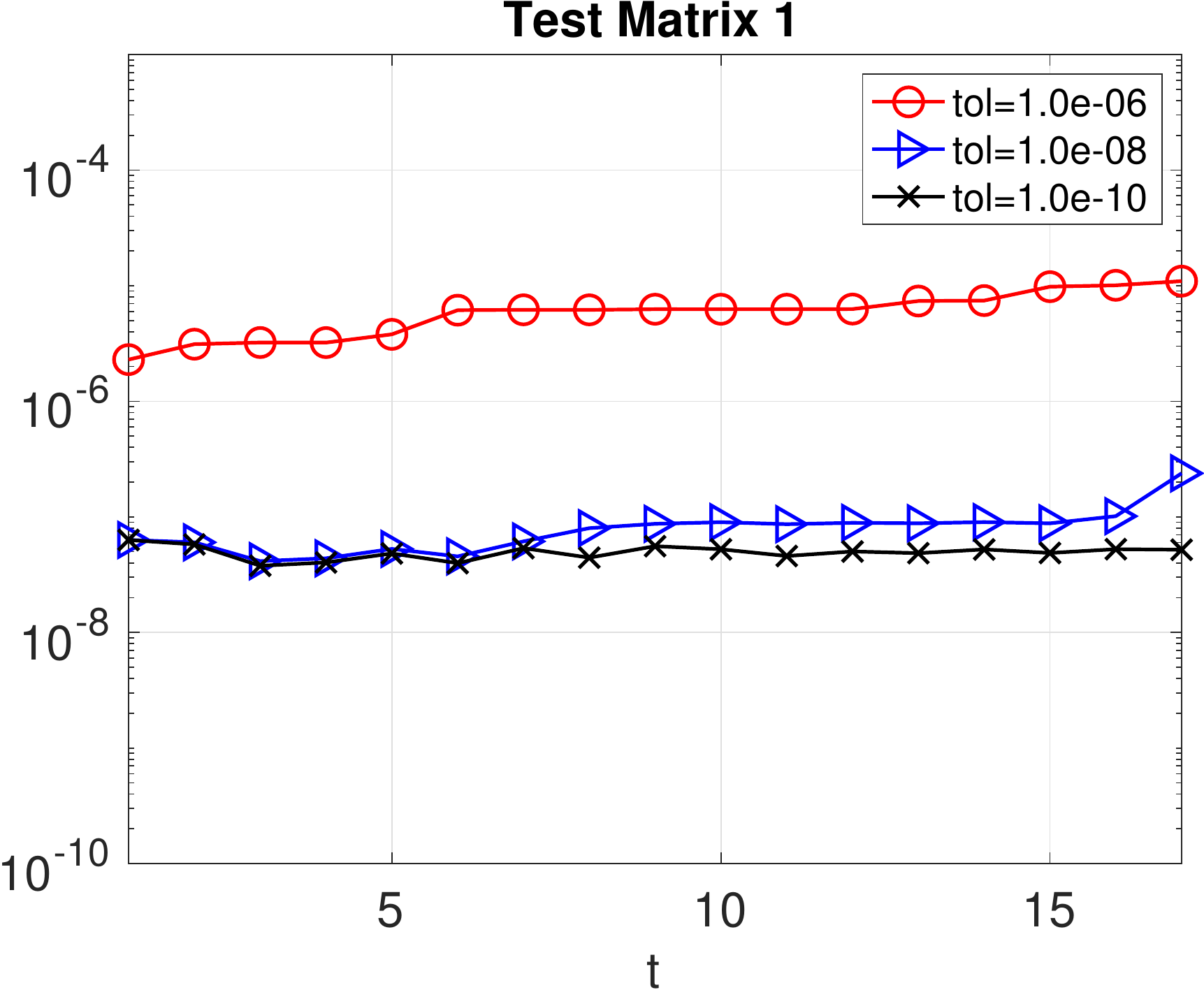} & \includegraphics[width=0.4\columnwidth]{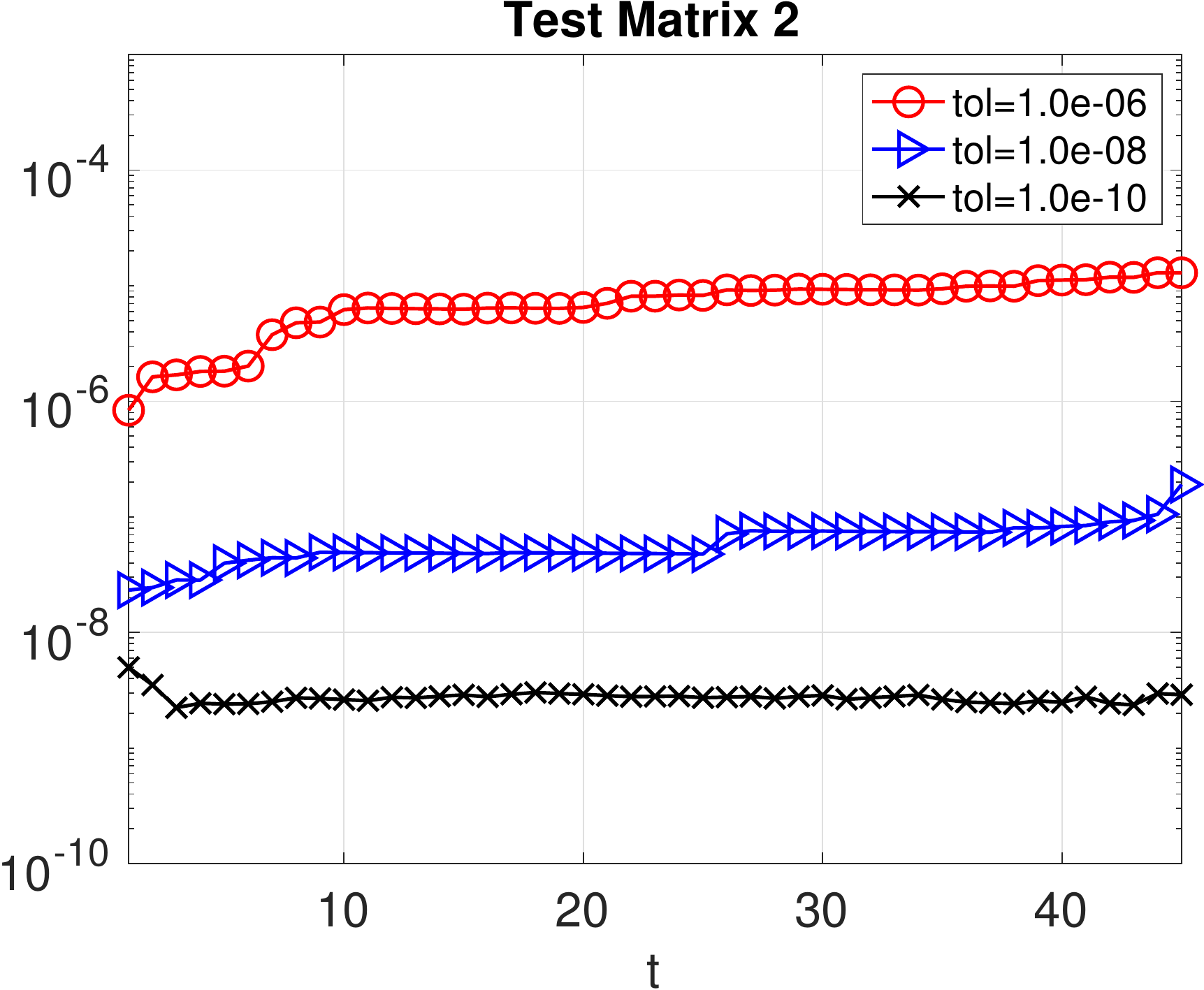}\tabularnewline
\end{tabular}
\par\end{centering}
\caption{\label{fig:shampoo_experiment}Simulating the use of our algorithm
to track $\protect\matL_{t}^{-\nicefrac{1}{4}}$ or $\protect\matR_{t}^{-\nicefrac{1}{4}}$
in \noun{Shampoo, }on two test matrices from the \noun{Lingvo} framework.}
\end{figure}

Our next set of experiments simulates the use of our algorithm to
track $\matL_{t}^{-\nicefrac{1}{4}}$ or $\matR_{t}^{-\nicefrac{1}{4}}$
in \noun{Shampoo} (see Section~\ref{subsec:shampoo_app}). We obtain
and preprocess the data in a similar way to~\citep{FHL22}, but the
experiment itself is different. We downloaded two test matrices available
from the \noun{Lingvo} framework for \noun{TensorFlow} \citep{Lingvo}
and are available on GitHub\footnote{\href{https://github.com/tensorflow/lingvo/tree/master/lingvo/core/testdata}{https://github.com/tensorflow/lingvo/tree/master/lingvo/core/testdata}}.
These matrices are obtained by accumulating updates with $\alpha=0$.
Unfortunately, the provided test matrices are only the final accumulated
matrix, and do not contain the discrete updates themselves, so we
need to extract updates that accumulate to the final matrix. We do
so in a similar fashion to the one used by \citet{FHL22}: we compute
an eigendecomposition, and keep dominant factors that are bigger than
$0.1$. This yields a rank $82$ approximation to the first matrix,
and a rank $221$ approximation to the second matrix.

For the experiment, we split the low rank approximation into discrete
updates of rank $5$. So we now have a sequence of $\{\matG_{s}\}$,
each $\matG_{s}$ having 5 columns. Our goal is to efficiently track
$\matL_{t}^{-\nicefrac{1}{4}}$ where $\matL_{t}=\alpha\matI+\sum_{s=1}^{t}\matG_{s}\matG_{s}^{\T}=\matL_{t-1}+\matG_{t}\matG_{t}^{\T}$
where we set $\alpha=0.001$. We use our algorithm to form two sets
of updates, $\{\matU_{s}\}$ and $\{\matW_{s}\}$, each with $5$
columns, such that $\matL_{t}^{-\nicefrac{1}{2}}\approx\matL_{t-1}^{-\nicefrac{1}{2}}+\matU_{t}\matU_{t}^{\T}\approx\alpha^{-\nicehalf}\matI+\sum_{s=1}^{t}\matU_{s}\matU_{s}^{\T}$
and $\matL_{t}^{-\nicefrac{1}{4}}\approx\matL_{t-1}^{-\nicefrac{1}{4}}+\matW_{t}\matW_{t}^{\T}\approx\alpha^{-\nicefrac{1}{4}}\matI+\sum_{s=1}^{t}\matW_{s}\matW_{s}^{\T}$.
Note that in iteration $t$, we consider the latest perturbation to
be of $\matL_{t-1}$ and the approximation of $\matL_{t-1}^{-\nicefrac{1}{2}}$.
This saves time (since the perturbation rank does not grow) and storage
(we do not need to keep previous updates). We plot in Figure~\ref{fig:shampoo_experiment}
the distance between $\matL_{t}^{-\nicefrac{1}{4}}$ and its approximation,
as it evolves over time. We do so for three different tolerances in
the internal Riccati low-rank solver. We see that our algorithm is
able to track $\matL_{t}^{-\nicefrac{1}{4}}$ well over time, without
the errors blowing-up.

\subsection*{Acknowledgments.}

The authors thank the anonymous reviewers for their helpful comments.
Haim Avron and Shany Shmueli were partially supported by the Israel
Science Foundation (grant no. 1272/17) and by the US-Israel Binational
Science Foundation (grant no. 2017698). Petros Drineas was partially
supported by NSF 10001415 and NSF 10001390.\bibliographystyle{icml2021}
\bibliography{bibtex}

\appendix

\section{\label{app:negative-ricatti}Solving Eq.~(\ref{eq:algebraic-ricatti})
for $\alpha=-1$}

If $\alpha=+1$, Eq.~(\ref{eq:algebraic-ricatti}) is an instance
of the \emph{algebraic Riccati equation}, for which \citet{BV14}
proposed an algorithm for finding an approximate low rank solution.
That algorithm can be adjusted to the $\alpha=-1$, by making several
small adjustments to the various \emph{Euclidean }components (the
Riemannian ones are obtained by converting the Euclidean components
to Riemannian ones). We frame the expressions with an $\alpha=\pm1$
to cover both cases concurrently.
\begin{itemize}
\item Optimization problem: the new optimization problem is:
\begin{equation}
\min_{\rank\matX=k,\Delta\succeq0}0.25\FNormS{\matA^{\T}\matX+\matX\matA^{\T}+\alpha\X\matB\matB^{\T}\X^{\T}-\matU\matU^{\T}}\label{eq:are_minus_alpha}
\end{equation}
\item Gradient expression: The method in \citep{BV14} keeps $\matX$ in
factorized low-rank form $\matX=\matY\matY^{\T}$. Let 
\[
\matS(\matY)\coloneqq\matA^{\T}\matY\matY^{\T}+\matY\matY^{\T}\matA^{\T}+\alpha\matY\matY^{\T}\matB\matB^{\T}\matY\matY^{\T}-\matU\matU^{\T}.
\]
The cost function in Eq.~(\ref{eq:are_minus_alpha}) is $F(\matY)\coloneqq0.25\FNormS{\matS(\matY)}$.
Simple calculations show that the gradient of $\matF$ is: 
\[
\nabla F(\matY)=\matA\matS(\matY)\matY+\matS(\matY)\matA^{\T}\matY+\alpha\matS(\matY)\matY^{\T}\matB\matB^{\T}\matY+\alpha\matB\matB^{\T}\matY\matY^{\T}\matS(\matY)
\]
\item Hessian expression: using a similar technique as \citep{BV14}, we
calculate directional derivative in direction $\w$ by computing $\lim\frac{1}{\epsilon}\left(\nabla F(\matY+\epsilon\matW)-\nabla F(\matY)\right)$.
In the limit, $\matS(\matY)$ and $\matS(\matY+\epsilon\matW)$ include
$\alpha$ and so are slightly different from the ones used in \citep{BV14}.
Nevertheless, the expression for the Euclidean Hessian is:
\begin{align*}
\matA\matS(\matY)\matY+\matA\matS(\matY)\matW+\matS(\matY)\matA^{\T}\matW+\matS(\matY)\matA^{\T}\matY+\\
\alpha\left(\matS(\matY)\matY\matY^{\T}\matB\matB^{\T}\matY+\matS(\matY)\matW^{\T}\matB\matB^{\T}\matY+\matS(\matY)\matW\matY^{\T}\matB\matB^{\T}\matY+\matS(\matY)\matY\matY^{\T}\matB\matB^{\T}\matW\right)+\\
\alpha\left(\matB\matB^{\T}\matY\matY^{\T}\matS(\matY)\matW+\matB\matB^{\T}\matY\matY^{\T}\matS(\matY)\matY+\matB\matB^{\T}\matY\matW^{\T}\matS(\matY)\matY+\matB\matB^{\T}\matW\matY^{\T}\matS(\matY)\matY\right)\\
\end{align*}
\end{itemize}

\end{document}